\newcommand{\cb}[1]{{\color{black}#1}}
\newtheorem{remark}{Remark}
\newtheorem{property}{Property}
\renewcommand{\a}{\alpha}
\newcommand{\bo}[1]{\mathbf{#1}} 
\newcommand{\llb}{\llbracket}
\newcommand{\rrb}{\rrbracket}
\newcommand{\lla}{\left\lbrace}
\newcommand{\rra}{\right\rbrace}
\renewcommand{\d}{\mathrm{d}}
\newcommand{\eps}{\varepsilon}
\newcommand{\abs}[1]{\lvert#1\rvert} 
\newcommand{\Labs}[1]{\left\lvert#1\right\rvert} 
\newcommand{\tends}{\rightarrow}
\newcommand{\norm}[1]{\lVert#1\rVert} 
\newcommand{\p}{\partial}
\newcommand{\half}{\textstyle{\frac{1}{2}}}
\DeclareMathOperator{\diam}{diam}
\DeclareMathOperator{\card}{card}
\DeclareMathOperator{\Div}{div}
\DeclareMathOperator{\Dim}{dim}
\DeclareMathOperator{\Curl}{curl}
\DeclareMathOperator{\Trace}{Tr}
\DeclareMathOperator{\divT}{\Div_{\mathrm{T}}}
\DeclareMathOperator{\nablaT}{\nabla_{\mathrm{T}}}
\DeclareMathOperator*{\argmin}{argmin}
\newcommand{\eval}[2]{\left. #1\right|_{#2}}
\newcommand{\pair}[2]{\langle #1,#2 \rangle}
\newcommand{\R}{\mathbb{R}}
\newcommand{\calA}{\mathcal{A}}
\newcommand{\calF}{\mathcal{F}}
\newcommand{\calT}{\mathcal{T}}
\newcommand{\calP}{\mathcal{P}}
\newcommand{\calK}{\mathcal{K}}
\newcommand{\Ld}{\Lambda}
\newcommand{\Om}{\Omega}
\newcommand{\DO}{\partial\Om}
\newcommand{\Ob}{\overline{\Omega}}
\newcommand{\Jstab}{J_h}
\newcommand{\normh}[1]{\norm{#1}_{2,h}}
\newcommand{\absj}[1]{\abs{#1}_{\mathrm{J},h}}
\newcommand{\cmu}{c_{\mu}}
\newcommand{\ceta}{c_{\eta}}
\newcommand{\ch}{c_{\calT}}
\newcommand{\cp}{c_{\calP}}
\newcommand{\Vh}{V_{h,\bo p}}
\newcommand{\VH}{V_{H,\bo q}}
\newcommand{\calFhi}{\calF_h^i}
\newcommand{\calFhib}{\calF_h^{i,b}}
\newcommand{\Fg}{F_\gamma}
\renewcommand{\dim}{d}
\newcommand{\kPA}{\kappa(\bo P^{-1}\bo A)}
\title{Nonoverlapping domain decomposition preconditioners for discontinuous Galerkin approximations of Hamilton--Jacobi--Bellman equations}
\author{Iain~Smears\footnotemark[2]}
\begin{document}

\renewcommand{\thefootnote}{\fnsymbol{footnote}}
\footnotetext[2]{Inria Paris, 2 Rue Simone Iff, 75589, Paris, France, iain.smears@inria.fr}
\renewcommand{\thefootnote}{\arabic{footnote}}

\maketitle

\begin{abstract}
We analyse a class of nonoverlapping domain decomposition preconditioners for nonsymmetric linear systems arising from discontinuous Galerkin finite element approximation of fully nonlinear Hamilton--Jacobi--Bellman (HJB) partial differential equations.
\cb{These nonsymmetric linear systems are uniformly bounded and coercive with respect to a related symmetric bilinear form, that is associated to a matrix $\bo A$. In this work, we construct a nonoverlapping domain decomposition preconditioner $\bo P$, that is based on $\bo A$, and we then show that the effectiveness of the preconditioner for solving the} nonsymmetric problems can be studied in terms of the condition number $\kPA$. In particular, we establish the bound $\kPA \lesssim 1+ p^6 H^3 /q^3 h^3$, where $H$ and $h$ are respectively the coarse and fine mesh sizes, and $q$ and $p$ are respectively the coarse and fine mesh polynomial degrees.
This represents the first such result for this class of methods that explicitly accounts for the dependence of the condition number on $q$; \cb{our analysis is founded upon} an original optimal order approximation result between fine and coarse discontinuous finite element spaces.
Numerical experiments demonstrate the sharpness of this bound.
Although the preconditioners are not robust with respect to the polynomial degree, our bounds quantify the effect of the coarse and fine space polynomial degrees. Furthermore, we  show computationally that these methods are effective in practical applications to nonsymmetric, fully nonlinear HJB equations under $h$-refinement for moderate polynomial degrees.
\end{abstract}

\begin{keywords}
domain decomposition, GMRES, discontinuous Galerkin, approximation in discontinuous spaces, Hamilton--Jacobi--Bellman equations
\end{keywords}

\begin{AMS}
65F10, 65N22, 65N55, 65N30, 35J66
\end{AMS}

\section{Introduction}\label{sec:introduction}
In \cite{Smears2013,Smears2014,Smears2015}, discontinuous Galerkin finite element methods (DGFEM) were introduced for the numerical solution of linear nondivergence form elliptic equations and fully nonlinear Hamilton--Jacobi--Bellman (HJB) equations with Cordes coefficients. In these applications, the appropriate norm on the finite element space is a broken $H^2$-norm with penalization of the jumps in values and in first derivatives across the faces of the mesh. As a result, it is typical for the condition number of the discrete problems to be of order $p^8/h^4$, where $h$ is the mesh size and $p$ is the polynomial degree.
The purpose of this work is to study the application of a commonly used class of nonoverlapping domain decomposition preconditioners to these problems.

Nonoverlapping domain decomposition methods, along with their overlapping counterparts, have been successfully developed for a range of applications of DGFEM by many authors \cite{Antonietti2007,Antonietti2008,Antonietti2011,Antonietti2009,Feng2001,Feng2005,Lasser2003}. In order to solve a problem on a fine mesh $\calT_h$, these methods combine a coarse space solver, defined on a coarse mesh $\calT_H$, with local fine mesh solvers, defined on a subdomain decomposition $\calT_S$ of the domain $\Om$. The discontinuous nature of the finite element space leads to a significant flexibility in the choice of the decomposition $\calT_S$, which can either be overlapping or nonoverlapping.
As explained in the above references, these preconditioners possess many advantages in terms of simplicity and applicability, as they allow very general choices of basis functions, nonmatching meshes and varying element shapes, and are naturally suited for parallelization. It has been pointed out by various authors, such as Lasser and Toselli in \cite[p.~1235]{Lasser2003}, that nonoverlapping methods feature reduced inter-subdomain communication burdens, thus representing an advantage in parallel computations.

For problems involving $H^1$-type norms, such as divergence form second-order elliptic PDE, nonoverlapping additive Schwarz preconditioners for $h$-version methods \cite{Feng2001} lead to condition numbers of order $1+H/h$, where $H$ is the coarse mesh size, while overlapping methods lead to a condition number of order $1+H/\delta$, where $\delta$ is the subdomain overlap. 
For problems in $H^2$-type norms such as the biharmonic equation, the $h$-version analysis \cite{Feng2005} leads to condition numbers of order $1+H^3/h^3$.
We remark that the analysis in these works leaves the polynomial degree implicit inside the generic constants. However an analysis that keeps track of all parameters is important in practice for determining their effect on the performance of the preconditioners, even if robustness of the condition number cannot be guaranteed.
Antonietti and Houston \cite{Antonietti2011} were the first to keep track of the dependence on the polynomial degrees for this class of preconditioners for problems in $H^1$-norms, where they showed a condition number bound of order $1+p^2 H/h$. However, their numerical experiments lead them to conjecture the improved bound of order $1+p^2 H/qh$, where $q$ is the coarse space polynomial degree. \cb{This conjecture was recently proved in \cite{Antonietti2016} using ideas first developed in this work.}

As can be seen from the theoretical analysis in the above references, the effectiveness of the preconditioner depends in an essential way on the approximation properties between the coarse and fine spaces.
In the analysis of $h$-version DGFEM, it is sufficient to consider low-order projection operators from the fine space to the coarse space; for example, coarse element mean-value projections are employed in \cite{Feng2001} and local first-order elliptic projections are used in \cite{Feng2005}. However, low-order projections lead to suboptimal bounds for the condition number with respect to polynomial degrees. This work resolves this suboptimality through an original optimal order approximation result between coarse and fine spaces.

There are further classes of preconditioners for $p$-version and $hp$-version methods for problems in $H^1$-norms that achieve condition numbers either independent or depending only polylogarithmically on the polynomial degree, such as Neumann--Neumann and FETI methods, see~\cite{Pavarino1994,Vasseur2004} and the many references therein. 
We are aware of one work on generalising these methods to $H^2$-norm problems: Brenner and Wang \cite{Brenner2012} considered  iterative substructuring methods for the $h$-version $C^0$ interior penalty discretizations of the biharmonic equation. They show that the usual choices of orthogonalised basis functions required by these algorithms do not extend to the $H^2$-norm context, and that different basis functions must be used on different elements of the mesh.
In comparison, the overlapping and nonoverlapping methods described above generalise straightforwardly to the $H^2$-norm context without additional difficulties. Moreover, a comparison of the computations in \cite{Brenner2005} and \cite{Brenner2012} suggests that the substructuring algorithms only yield a similar performance in practice to the two-level additive Schwarz methods for these problems.

\subsection{Main results}

The numerical scheme of \cite{Smears2014} for fully nonlinear HJB equations leads to a discrete nonlinear problem that can be solved iteratively by a semismooth Newton method. The linear systems obtained from the Newton linearization are generally nonsymmetric but coercive with respect to a discrete $H^2$-type norm.
In section~\ref{sec:lin_hjb}, we apply existing GMRES convergence theory for SPD preconditioners \cite{Eisenstat1983,Loghin2004} to  these nonsymmetric systems, leading to a guaranteed minimum convergence rate, with a contraction factor expressed in terms of the condition number $\kPA$, where $\bo A$ is the matrix of a related symmetric bilinear form that is spectrally equivalent to a discrete $H^2$-type norm, and where $\bo P$ is an arbitrary symmetric positive definite preconditioner.
\cb{Thus, the construction and analysis of preconditioners for a symmetric problem can be used for the solution of the nonsymmetric systems appearing in applications to HJB equations \cite{Smears2014}. A further benefit is that the preconditioner does not require re-assembly at each new semismooth Newton iteration.}

Section~\ref{sec:preconditioners} presents the specific construction of a nonoverlapping additive Schwarz preconditioner $\bo P$ based on $\bo A$, and sections~\ref{sec:approximability} and \ref{sec:stable_decomposition} show the condition number bound
\begin{equation}\label{eq:intro_spectral_bound}
\kPA \lesssim 1+ \frac{p^2 H}{q\, h} +  \frac{p^6 \,H^3}{q^3\, h^3}.
\end{equation}
In comparison to the existing literature, this is the first bound for this class of preconditioners that explicity accounts for the coarse mesh polynomial degree. Unfortunately, \eqref{eq:intro_spectral_bound} implies that this standard class of preconditioners cannot be expected to be robust with respect to the polynomial degree. Nevertheless, our result shows that the coarse space polynomial degree can contribute significantly to reducing the condition number.

The central original result underpinning our analysis is Theorem~\ref{thm:dg_h2_approximation} of section~\ref{sec:approximability}, which shows that for any $v_h\in\Vh$, there is a function $v \in H^2(\Om)\cap H^1_0(\Om)$ such that
\begin{equation}\label{eq:intro_approximation}
\begin{aligned}
\norm{v_h - v}_{L^2(\Om)} + \frac{h}{p} \norm{v_h-v}_{H^1(\Om;\calT_h)} & \lesssim \frac{h^2}{p^2}\absj{v_h}, &
\norm{v}_{H^2(\Om)} & \lesssim \normh{v_h},
\end{aligned}
\end{equation}
where the piecewise Sobolev norms $\norm{\cdot}_{H^s(\Om;\calT_h)}$, jump seminorm $\absj{\cdot}$, and the discrete $H^2$-type norm $\normh{\cdot}$ are defined in section~\ref{sec:definitions}.
This result is a natural converse to classical direct approximation theory, since, here, the nonsmooth function from the discrete space $\Vh$ is approximated by a smoother function from an infinite dimensional space.
It follows from \eqref{eq:intro_approximation} that there exists a function $v_H$ in the coarse space $\VH$, of polynomials of degree $q$ on $\calT_H$, such that
\begin{equation}
\begin{aligned}
\norm{v_h-v_H}_{H^k(\Om;\calT_h)} &\lesssim \frac{H^{2-k}}{q^{2-k}}\normh{v_h}, & k&\in\{0,1,2\},
\end{aligned}
\end{equation}
thus yielding an approximation between coarse and fine meshes that is optimal in the orders of the mesh size and the polynomial degree. The approximation result is used to show the stable decomposition property for the additive Schwarz preconditioner in section~\ref{sec:stable_decomposition}, thereby leading to the spectral bound \eqref{eq:intro_spectral_bound}.

The first numerical experiment, in section~\ref{sec:numexp1}, confirms that \eqref{eq:intro_spectral_bound} is sharp with respect to the orders in the polynomial degrees. The experiment of section~\ref{sec:numexp2} compares nonoverlapping methods with their overlapping counterparts, where it is found that they are competitive in both iteration counts and computational cost.
Despite the polynomial degree suboptimality of these preconditioners, in section~\ref{sec:numexp3} we show computationally that for $h$-refinement, nonoverlapping methods can be efficient and competitive in challenging applications to fully nonlinear HJB equations.


\section{Definitions}\label{sec:definitions}
For real numbers $a$ and $b$, we shall write $a\lesssim b$ to signify that there is a positive constant $C$ such that $a\leq C b$, where $C$ is independent of the quantities of interest, such as the element sizes and polynomial degrees, but possibly dependent on other quantities, such as the mesh regularity parameters.

Let $\Om\subset \R^{\dim}$, $\dim\in\{2,3\}$, be a bounded convex polytopal domain.
Note that convexity of $\Om$ implies that the boundary $\DO$ of $\Om$ is Lipschitz \cite{Grisvard2011}.
Let $\{\calT_h\}_h$ be a sequence of shape-regular meshes on $\Om$, consisting of simplices or parallelepipeds.
For each element $K\in\calT_h$, let $h_K \coloneqq \diam K$. It is assumed that $h = \max_{K\in\calT_h} h_K$ for each mesh $\calT_h$.
Let $\calF^i_h$ denote the set of interior faces of the mesh $\calT_h$ and let $\calF_h^b$ denote the set of boundary faces.
The set of all faces of $\calT_h$ is denoted by $\calFhib \coloneqq \calF_h^i\cup\calF_h^b$. Since each element has piecewise flat boundary, the faces may be chosen to be flat. For $K \in \calT_h$ or $F\in \calFhib$, we use $\pair{\cdot}{\cdot}_K$, respectively $\pair{\cdot}{\cdot}_F$, to denote the $L^2$-inner product over $K$, respectively $F$, of scalar functions, vector fields, and higher-order tensors.
\paragraph{Mesh conditions}
The meshes are allowed to be nonmatching, i.e.\ there may be hanging nodes. We assume that there is a uniform upper bound on the number of faces composing the boundary of any given element; in other words, there is a $c_{\calF}>0$, independent of $h$, such that
\begin{equation}\label{eq:card_F_bound}
\max_{K \in \calT_h} \card \{F \in \calFhib \colon F \subset \p K \} \leq c_{\calF} \qquad\forall K\in\calT_h.
\end{equation}
It is also assumed that any two elements sharing a face have commensurate diameters, i.e.\ there is a $\ch \geq 1$, independent of $h$, such that
\begin{equation}\label{eq:c_h_bound}
 \max ( h_K, h_{K^{\prime}} ) \leq \ch \min( h_K, h_{K^{\prime}} ),
\end{equation}
for any $K$ and $K^{\prime}$ in $\calT_h$ that share a face.
For each $h$, let $\bo p \coloneqq \left( p_K \colon K\in \calT_h\right)$ be a vector of positive integers; note that this requires $p_K \geq 1$ for all $K\in\calT_h$.
We make the assumption that $\bo p$ has \emph{local bounded variation}: there is a $\cp \geq 1$, independent of $h$, such that
\begin{equation}\label{eq:c_p_bound}
 \max( p_K, p_{K^\prime} ) \leq \cp \min( p_K, p_{K^\prime} ),
\end{equation}
for any $K$ and $K^{\prime}$ in $\calT_h$ that share a face.

\paragraph{Function spaces}
For each $K \in \calT_h$, let $\calP_{p_K}(K)$ be the space of all real-valued polynomials in $\R^\dim$ with either total or partial degree at most $p_K$. In particular, we allow the combination of spaces of polynomials of fixed total degree on some parts of the mesh with spaces of polynomials of fixed partial degree on the remainder. We also allow the use of the space of polynomials of total degree at most $p_K$ even when $K$ is a parallelepiped.
The discontinuous Galerkin finite element spaces $\Vh$ are defined by
\begin{equation}\label{eq:cordes_fem_space}
 \Vh \coloneqq \left\{v \in L^2(\Om)\colon \eval{v}{K} \in \calP_{p_K}(K),\;\forall K \in \calT_h \right\}.
\end{equation}
Let $\bo s  \coloneqq \left( s_K \colon K\in \calT_h\right)$ denote a vector of non-negative real numbers. The broken Sobolev space $H^{\bo s}(\Om;\calT_h)$ is defined by
\begin{equation}\label{eq:broken_sobolev}
 H^{\bo s}(\Om;\calT_h) \coloneqq \left\{ v \in L^2(\Om)\colon\eval{v}{K} \in H^{s_K}(K),\;\forall K\in\calT_h \right\}.
\end{equation}
For $s\geq 0$, we set $H^s(\Om;\calT_h) \coloneqq H^{\bo s}(\Om;\calT_h)$, where $s_K = s$ for all $K\in\calT_h$.
The norm $\norm{\cdot}_{H^{\bo s}(\Om;\calT_h)}$ and semi-norm $ \abs{\cdot}_{H^{\bo s}(\Om;\calT_h)}$ are defined on $H^{\bo s}(\Om;\calT_h)$ as
\begin{equation}\label{eq:broken_norms}
\norm{v}_{H^{\bo s}(\Om;\calT_h)}\coloneqq\left(\sum_{K\in\calT_h} \norm{v}_{H^{s_K}(K)}^2\right)^{\frac{1}{2}},
\quad \abs{v}_{H^{\bo s}(\Om;\calT_h)} \coloneqq \left( \sum_{K\in\calT_h} \abs{v}_{H^{s_K}(K)}^2 \right)^{\frac{1}{2}}.
\end{equation}
\cb{For a function $v_h\in\Vh$, the element-wise gradient $\nabla v_h|_K$ and the Hessian $D^2 v_h|_K$ are well-defined for all $K \in \calT_h$ since $v_h$ is smooth on $K$. Thus expressions such as $\pair{D^2 u_h}{D^2 v_h}_K$ are well-defined for all $K\in\calT_h$ and all $u_h$, $v_h \in \Vh$.}

\paragraph{Jump and average operators}
For each face $F\in\calFhib$, let $n_F \in \R^{\dim}$ denote a \emph{fixed} choice of a unit normal vector to $F$. Since $F$ is flat, $n_F$ is constant over $F$.
Let $K$ be an element of $\calT_h$ for which $F\subset \p K$; then $n_F$ is either inward or outward pointing with respect to $K$.
Let $\tau_F\colon H^{s}(K)\tends H^{s-1/2}(F)$, $s>1/2$, denote the trace operator from $K$ to $F$, and let $\tau_F$ be extended componentwise to vector-valued functions.

For each face $F$, define the jump operator $\llb \cdot \rrb$ and the average operator $\lla \cdot \rra$ by
\begin{align*}
  \llb \phi \rrb  &\coloneqq \tau_F \left( \eval{\phi}{K_{\mathrm{ext}}} - \eval{\phi}{K_{\mathrm{int}}} \right),
& \lla \phi \rra &\coloneqq  \frac{1}{2}\tau_F \left( \eval{\phi}{K_{\mathrm{ext}}}  + \eval{\phi}{K_{\mathrm{int}}} \right),
&\text{if }F\in\calF^i_h,
\\
\llb \phi \rrb  &\coloneqq \tau_F \left(\eval{\phi}{K_{\mathrm{ext}}}\right),
&  \lla \phi \rra &\coloneqq  \tau_F \left(\eval{\phi}{K_{\mathrm{ext}}} \right),
&\text{if }F\in\calF_h^b,
\end{align*}
where $\phi$ is a sufficiently regular scalar or vector-valued function, and $K_{\mathrm{ext}}$ and $K_{\mathrm{int}}$ are the elements to which $F$ is a face, i.e.\ $F= \p K_{\mathrm{ext}} \cap \p K_{\mathrm{int}}$.
Here, the labelling is chosen so that $n_F$ is outward pointing with respect to $K_{\mathrm{ext}}$ and inward pointing with respect to $K_{\mathrm{int}}$.
Using this notation, the jump and average of scalar-valued functions, resp.\ vector-valued, are scalar-valued, resp.\ vector-valued.

\paragraph{Tangential differential operators}
For $F \in \calFhib$, let $H^s_{\mathrm{T}}(F)$ denote the space of $H^s$-regular tangential vector fields on $F$, thus $H^s_{\mathrm{T}}(F)\coloneqq\{v\in H^s(F)^\dim\colon v\cdot n_F =0\text{ on }F\}$.
We define the tangential gradient $\nablaT \colon H^s(F) \tends H^{s-1}_{\mathrm{T}}(F)$ and the tangential divergence $\divT \colon H^s_{\mathrm{T}}(F)\tends H^{s-1}(F)$, where $s\geq 1$, following \cite{Grisvard2011}. Let $\{t_i\}_{i=1}^{\dim-1}\subset \R^\dim$ be an orthonormal coordinate system on $F$. Then, for $u \in H^s(F)$ and $v \in H^s_{\mathrm{T}}(F)$ such that $v=\sum_{i=1}^{\dim-1} v_i\,t_i$, with $v_i \in H^s(F)$ for $i=1,\dots,\dim-1$, we define
\begin{equation}\label{eq:tangentialoperators}
\nablaT u  \coloneqq \sum_{i=1}^{\dim-1} t_i \frac{\p u}{\p t_i}, \qquad
\divT v \coloneqq \sum_{i=1}^{\dim-1} \frac{\p v_i}{\p t_i}.
\end{equation}

\paragraph{Mesh-dependent norms}
In the following, we let $u_h$ and $v_h$ denote functions in $\Vh$.
For face-dependent positive real numbers $\mu_F$ and $\eta_F$, let the jump stabilization bilinear form $\Jstab\colon \Vh\times \Vh$ be defined by
\begin{multline}\label{eq:Jstab_scalar}
 \Jstab(u_h,v_h) \coloneqq \sum_{F\in\calF^i_h} \mu_F \pair{\llb \nabla u_h\cdot n_F \rrb}{\llb \nabla v_h\cdot n_F \rrb}_F
\\+ \sum_{F \in\calFhib}\bigl[ \mu_F \pair{\llb \nablaT u_h \rrb}{\llb \nablaT v_h \rrb}_F
+  \eta_F \pair{\llb u_h \rrb}{\llb v_h \rrb}_F\bigr].
\end{multline}
Define the jump seminorm $\absj{\cdot}$ and the mesh-dependent norm $\normh{\cdot}$ on $\Vh$ by
\begin{align}
 \absj{v_h}^2 &\coloneqq  \Jstab(v_h,v_h),&
 \qquad \normh{v_h}^2 &\coloneqq \sum_{K\in\calT_h} \norm{v_h}_{H^2(K)}^2 + \absj{v_h}^2.
\end{align}
For each face $F \in \calFhib$, define
\begin{equation}\label{eq:def_tilde_ph}
\tilde{h}_F \coloneqq
  \begin{cases}
  \min(h_K,h_{K^\prime}), &\text{if }F \in \calF^i_h, \\
  h_K, &\text{if }F \in \calF^b_h,
  \end{cases}
  \qquad
   \tilde{p}_F \coloneqq
  \begin{cases}
  \max(p_K,p_{K^\prime}), &\text{if }F \in \calF^i_h, \\
  p_K, &\text{if }F \in \calF^b_h,
  \end{cases}
\end{equation}
where $K$ and $K^\prime$ are such that $F = \p K \cap \p K^\prime$ if $F\in\calF^i_h$ or $F \subset \p K \cap \DO$ if $F\in\calF^b_h$. The assumptions on the mesh and the polynomial degrees, in particular \eqref{eq:c_h_bound} and \eqref{eq:c_p_bound}, show that if $F$ is a face of an element $K$, then $ h_K \leq \ch \,\tilde{h}_F$ and $\tilde{p}_F\leq \cp\, p_K$.
Henceforth, it is assumed that the parameters $\mu_F$ and $\eta_F$ in \eqref{eq:Jstab_scalar} are given by 
\begin{equation}\label{eq:eta_mu}
\begin{aligned}
 \mu_F & \coloneqq \cmu \frac{\tilde{p}_F^2}{\tilde{h}_F}, &  \eta_F &\coloneqq \ceta \frac{\tilde{p}_F^6}{\tilde{h}^3_F}& &\forall\,F\in\calFhib,
 \end{aligned}
\end{equation}
where $\cmu$ and $\ceta$ are fixed positive constants independent of $h$ and $\bo p$.

\paragraph{Approximation}%
Under the hypothesis of shape-regularity of $\{\calT_h\}$, for any function $u\in H^{\bo s}(\Om;\calT_h)$, there exists an approximation $\Pi_h u \in \Vh$, such that for each element $K\in \calT_h$,
\begin{subequations}\label{eq:approximation_bound}
\begin{equation}\label{eq:approximation_element}
 \norm{u-\Pi_h u}_{H^r(K)} \lesssim \frac{h_K^{\min(s_K,\,p_K+1)-r}}{p_K^{s_K-r}}\,\norm{u}_{H^{s_K}(K)} \quad \forall\,r, \, 0\leq r \leq s_K,
\end{equation}
and, if $s_K >1/2$,
\begin{equation}\label{eq:approximation_face}
 \norm{D^{\a}\left(u-\Pi_h u\right)}_{L^2(\p K)} \lesssim \frac{h_K^{\min(s_K,\,p_K+1)-\abs{\a}-1/2}}{p_K^{s_K-\abs{\a}-1/2}}\,\norm{u}_{H^{s_K}(K)} 
\quad \forall \a,\,\abs{\a} \leq k,
\end{equation}
\end{subequations}
where $k$ is the greatest non-negative integer strictly less than $s_K-1/2$. The constants in \eqref{eq:approximation_element} and \eqref{eq:approximation_face} do not depend on $u$, $K$, $p_K$, $h_K$ or $r$, but depend possibly on $\max_{K\in\calT_h} s_K$. Vector fields can be approximated componentwise.

\section{HJB equations}\label{sec:lin_hjb}
We consider fully nonlinear HJB equations of the form
\begin{equation}\label{eq:HJB_PDE}
\begin{aligned}
\sup_{\a\in\Ld}\left[ L^\a u - f^\a \right] &=0 & &\text{in }\Om,\\
u &= 0  & &\text{on }\DO,
\end{aligned}
\end{equation}
where $\Om$ is a bounded convex domain, $\Ld$ is a compact metric space, and the operators $L^\a$ are given by
\begin{equation}\label{eq:L_def}
\begin{aligned}
L^\a v &\coloneqq  a^{\a}\colon D^2 v  +  b^\a \cdot \nabla v - c^\a\,v, &  v & \in H^2(\Om),\;\a\in\Ld.
\end{aligned}
\end{equation}
For simplicity of presentation here, we restrict our attention to the case $b^\a\equiv 0$, $c^\a \equiv 0$, and refer the reader to \cite{Smears2014} for the general case. The matrix-valued function $a$ and the scalar function $f$ are assumed to be continuous on $\Ob\times\Ld$, and $a$ is assumed to be uniformly elliptic, uniformly over $\Ob\times \Ld$.
The PDE in \eqref{eq:HJB_PDE} is fully nonlinear in the sense that the Hessian of the unknown solution appears inside the nonlinear term in \eqref{eq:HJB_PDE}. As a result of the nonlinearity, no weak form of the equation is available: this has constituted a long-standing difficulty in the development of high-order methods for this class of problems. 

However, provided that the coefficients of $L^\a$ satisfy the Cordes condition, which, in the case of pure diffusion, requires that there exist $\eps\in(0,1]$ such that
\begin{equation}\label{eq:cordes}
\frac{\abs{a^\a(x)}^2}{\left(\Trace a^\a(x)\right)^2} \leq \frac{1}{\dim - 1 + \eps} \quad\forall\,\a\in\Ld,\; \forall\,x\in\Om.
\end{equation}
then the boundary-value problem \eqref{eq:HJB_PDE} has a unique solution in $H^2(\Om)\cap H^1_0(\Om)$, see \cite[Theorem~3]{Smears2014}. Observe that for problems in two spatial dimensions, condition~\eqref{eq:cordes} is equivalent to uniform ellipticity.

Defining the operator $\Fg[u]\coloneqq \sup_{\a\in\Ld}\left[ \gamma^\a (L^\a u-f^\a)\right]$, where $\gamma^\a = \Trace a^\a / \abs{a^\a}^2$, the numerical scheme of \cite{Smears2014} for solving \eqref{eq:HJB_PDE} associated to a homogeneous Dirichlet boundary condition is to find $u_h\in\Vh$ such that
\begin{equation}\label{eq:HJB_scheme}
\begin{aligned}
\calA_h(u_h;v_h) & = 0 & \forall\,v_h\in\Vh,
\end{aligned}
\end{equation}
where the nonlinear form $\calA_h$ is defined in \cite[Eq.~(5.3)]{Smears2014}, and can be equivalently given as
\begin{multline}
\calA_h(u_h;v_h)\coloneqq \sum_{K\in\calT_h} \pair{\Fg[u_h]}{\Delta v_h}_K \\ + \frac{1}{2}\left( a_h(u_h,v_h) - \sum_{K\in\calT_h} \pair{\Delta u_h}{\Delta v_h}_K + \Jstab(u_h,v_h)\right),
\end{multline}
where the bilinear form $a_h\colon\Vh\times\Vh\tends \R$ is defined by
\begin{multline}\label{eq:a_h_definition}
a_h(u_h,v_h) \coloneqq \sum_{K\in\calT_h} \pair{D^2u_h}{D^2 v_h}_{K}  + \Jstab(u_h,v_h)
\\ + \sum_{F\in \calFhi} \left[ \pair{\divT \nablaT \lla u_h \rra}{\llb \nabla v_h\cdot n_F \rrb}_F + \pair{\divT \nablaT \lla v_h \rra}{\llb \nabla u_h\cdot n_F \rrb}_F\right]
\\ - \sum_{F\in\calFhib} \left[\pair{\nablaT \lla \nabla u_h\cdot n_F\rra}{\llb \nablaT v \rrb}_F + \pair{\nablaT \lla \nabla v_h\cdot n_F\rra}{\llb \nablaT v_h \rrb}_F\right].
\end{multline}

\subsection{\cb{Semismooth Newton method}}
\cb{In~\cite[Section~8]{Smears2014}, it is shown that the discretized nonlinear problem \eqref{eq:HJB_scheme} can be solved by a semismooth Newton method, which leads to a sequence of nonsymmetric but positive definite linear systems to be solved at each iteration.
We summarize here the essential ideas on the semismooth Newton, and refer the reader to~\cite{Smears2014} for the complete analysis.

For $x\in \Omega$ and $M \in \R^{\dim\times\dim}_{\mathrm{sym}}$, define $\Fg(x,M)\coloneqq \sup_{\a\in\Ld}[\gamma^{\a}(x)\left(a^{\a}(x){:}M - f^{\a}(x)\right)]$, and let $\Ld(x,M)$ denote the set of all $\a \in \Ld$ that attain the supremum in $\Fg(x,M)$; note that $\Ld(x,M)$ is always a non-empty subset of $\Ld$ due to the compactness of $\Ld$ and the continuity of the functions $a$, $f$ and $\gamma$ over $\Ob\times \Ld$. This defines a set-valued mapping $(x,M)\mapsto \Ld(x,M)$.
For a function $v\in H^2(\Om;\calT_h)$, let $\Ld[v]$ denote the set of all Lebesgue measurable mappings $\alpha(\cdot)\colon \Om\tends \Ld$ that satisfy $\alpha(x) \in \Ld(x,D^2 v(x))$ for almost every $x\in \Om$; in~\cite[Theorem~10]{Smears2014}, it is shown that $\Ld[v]$ is non-empty for any $v\in H^2(\Om;\calT_h)$.

The semismooth Newton method is now defined as follows. Start by choosing an initial iterate $u_h^0 \in \Vh$. Then, for each nonnegative integer $j$, given the previous iterate~$u_h^j\in\Vh$, choose an $\a_j \in \Ld[u_h^j]$.
Next, the function $f^{\a_j} \colon \Om \tends \R$ is defined by $f^{\a_j}\colon x\mapsto f^{\a_j(x)}(x)$; the functions $a^{\a_j}$ and $\gamma^{\a_j}$ are defined in a similar way. Note that the measurability of the mappings $\a_j$ ensures the measurability of $f^{\a_j}$, $a^{\a_j}$ and $\gamma^{\a_j}$.
Then, find the solution $u^{j+1}_h\in\Vh$ of the linearized system
\begin{equation}\label{eq:semi_newton_scheme}
 B_h^j(u_h^{j+1},v_h)=\sum_{K\in\calT_h}\pair{\gamma^{\a_j}f^{\a_j}}{\Delta v_h}_K \quad\forall\,v_h\in\Vh,
\end{equation}
where the bilinear form $B_h^j\colon\Vh\times\Vh\tends \R$ is defined by
\begin{multline}\label{eq:bhj_def}
B_h^j(w_h,v_h) \coloneqq \sum_{K\in\calT_h} \pair{\gamma^{\a_j} a^{\a_j}{:} D^2 w_h}{\Delta v_h}_K   \\ + \frac{1}{2}\left( a_h(u_h,v_h) - \sum_{K\in\calT_h} \pair{\Delta u_h}{\Delta v_h}_K + \Jstab(u_h,v_h)\right),
 \end{multline}
In \cite[Theorem~11]{Smears2014}, it was shown that $u_h^j\tends u$ as $j\tends \infty$ for a sufficiently close initial guess $u_h^0$, and moreover that the convergence is superlinear. It was also shown that the bilinear forms $B_h^j$ are uniformly bounded and coercive in an $H^2$-type norm, with constants independent of the iterates. Since the preconditioners of this work take advantage of the coercivity of the $B_h^j$, we summarize the relevant results in the following lemma.}
\begin{lemma}\label{lem:a_h_coercivity}
Let $\Om$ be a bounded convex polytopal domain and let $\{\calT_h\}_h$ be a shape-regular sequence of meshes satisfying \eqref{eq:card_F_bound}. Let the bilinear forms $B_h^j$ be defined by \eqref{eq:bhj_def}. Then, there exist positive constants $\underline{c}_{\mu}$ and $\underline{c}_{\eta}$ such that if $\cmu \geq \underline{c}_{\mu}$ and $\ceta\geq\underline{c}_{\eta}$, then the bilinear forms $a_h$ and $B_h^j$ are uniformly coercive: for all $v_h$, $w_h\in\Vh$, we have
\begin{align}
\normh{v_h}^2 &\lesssim a_h(v_h,v_h), &  & & \abs{a_h(v_h,w_h)}\lesssim \normh{v_h}\norm{w_h}_{h,2},\label{eq:poincare} \\
\normh{v_h}^2 &\lesssim B_h^j(v_h,v_h), & & & \abs{B_h^j(v_h,w_h)}\lesssim \normh{v_h}\normh{w_h},\label{eq:bhj_coercivity}
\end{align}
where the constants are independent of the sequence $\{u_h^j\}_{j=0}^\infty$ and of the choice of the mappings $\a_j \in \Ld[u_h^j]$ for each $j\geq 0$.
\end{lemma}
\begin{proof}
First we prove \eqref{eq:poincare}. The continuity bound in \eqref{eq:poincare} is a straightforward consequence of the trace and inverse inequalities. To show the coercivity bound, we first show that 
\begin{equation}\label{eq:poincare_2}
\normh{v_h}^2 \lesssim  \sum_{K\in\calT_h} \norm{D^2 v_h}_{L^2(K)}^2 + \absj{v_h}^2  \eqqcolon \abs{v_h}_{h,2}^2.
\end{equation}
For any $v_h\in\Vh$, integration by parts gives
\begin{multline}
\sum_{K\in\calT_h} \norm{\nabla v_h}_{L^2(K)}^2 = \sum_{K\in\calT_h}\pair{v_h}{-\Delta v_h}_K + \sum_{F\in\calFhib}\pair{\llb v_h \rrb}{\lla \nabla v_h\cdot n_F\rra}_F\\ + \sum_{F\in\calFhi}\pair{\lla  v_h \rra}{\llb \nabla v_h\cdot n_F\rrb}_F.
\end{multline}
Hence, the trace and inverse inequalities imply that
\begin{equation}\label{eq:poincare_3}
\sum_{K\in\calT_h} \norm{\nabla v_h}_{L^2(K)}^2  \lesssim \norm{v_h}_{L^2(\Om)} \abs{v_h}_{h,2}.
\end{equation}
We recall the broken Poincar\'e inequality
\begin{equation}
\norm{v_h}_{L^2(\Om)}^2 \lesssim \sum_{K\in\calT_h}\norm{\nabla v_h}_{L^2(K)}^2 + \sum_{F\in\calFhib}\frac{1}{\tilde{h}_F} \norm{\llb v_h \rrb}_{L^2(F)}^2.
\end{equation}
Therefore it follows from \eqref{eq:poincare_3} that we have $\sum_{K\in\calT_h} \norm{v_h}_{H^1(K)}^2 \lesssim \abs{v_h}_{h,2}^2$, from which we deduce \eqref{eq:poincare_2}.
The proof of \eqref{eq:poincare} is now completed by noting that \cite[Lemma~7]{Smears2013} implies that there exist $\underline{c}_{\mu}$ and $\underline{c}_{\eta}$ such that $\abs{v_h}_{h,2}^2 \lesssim a_h(v_h,v_h)$, whenever $\cmu \geq \underline{c}_{\mu}$ and $\ceta\geq\underline{c}_{\eta}$, \cb{since $a_h$ equals the bilinear form denoted by $B_{\rm{DG}(1)}$ in the notation of \cite[Lemma~7]{Smears2013}}.
\cb{The continuity bound for $B_h^j$ in~\eqref{eq:bhj_coercivity} can also be shown straightforwardly through the Cauchy--Schwarz inequality with the trace and inverse inequalities, where we note that the functions $\gamma^{\a_j}$, respectively $a^{\a_j}$, appearing in \eqref{eq:bhj_def} are uniformly bounded in $L^\infty$ by $\norm{\gamma}_{C(\Ob\times \Ld)}$, respectively by $\norm{a}_{C(\Ob\times\Ld;\R^{\dim\times\dim})}$; this implies that the continuity constants in \eqref{eq:bhj_coercivity} can be taken to be independent of the $\{u_h^j\}_{j=0}^\infty$. The coercivity bound~in \eqref{eq:bhj_coercivity} was shown in~\cite[Theorem~8]{Smears2013} and \cite[Eq.~(8.5)]{Smears2014}, where it is seen that the coercivity constant is independent of the iteration count $j$, but otherwise may depend on the constant $\eps$ from \eqref{eq:cordes} and the choice of the penalty parameters $\cmu$ and $\ceta$.}
\end{proof}

\subsection{Iterative solution by the preconditioned GMRES method}
Each step of the semismooth Newton method requires the solution of~\eqref{eq:semi_newton_scheme}.
These linear systems have a common general form, which consists of finding $\tilde{u}_h \in \Vh$ such that
\begin{equation}\label{eq:linearized_nonsymm}
\begin{aligned}
B_h(\tilde{u}_h,v_h) = \ell_h(v_h) & & & \forall \,v_h\in\Vh,
\end{aligned}
\end{equation}
where we shall henceforth omit to denote the dependence of the bilinear form $B_h$ and of the right-hand side $\ell_h$ on the iteration number of the semismooth Newton method.
It follows from Lemma~\ref{lem:a_h_coercivity} that there exist positive constants $c_{B}$ and $C_B$ such that, for any $v_h$ and $w_h\in \Vh$,
\begin{equation}\label{eq:a_b_equivalence}
\begin{aligned}
 a_h(v_h,v_h) \leq \frac{1}{c_B} B_h(v_h,v_h),& & & \abs{B_h(v_h,w_h)}\leq C_B\sqrt{a_h(v_h,v_h)}\sqrt{a_h(w_h,w_h)},
\end{aligned}
\end{equation}
where $c_B$ and $C_B$ are independent of the iteration count of the semismooth Newton method and the discretization parameters. Therefore the sequence of linearisations of \eqref{eq:HJB_scheme} are uniformly bounded and coercive with respect to the norm defined by the bilinear form $a_h$.

The coercivity and boundedness of $B_h$ imply that an efficient preconditioner for $a_h$ can also be used effectively as a preconditioner for the GMRES algorithm applied to \eqref{eq:linearized_nonsymm}.
 Indeed, assume that $\bo P$ is an SPD preconditioner for the matrix $\bo A\coloneqq\left(a_h(\phi_i,\phi_j)\right)$ that satisfies
\begin{equation}\label{eq:hypo_p_a_equivalence}
\begin{aligned}
0<c_{\bo P}\leq  \frac{ \bo v^\top \bo A \bo v}{\bo v^\top \bo P \bo v} \leq C_{\bo P} & & & \forall\, \bo v \in \R^{\Dim \Vh}\setminus\{0\},
\end{aligned}
\end{equation}
where we assume that $c_{\bo P}$ and $C_{\bo P}$ are the best possible constants in \eqref{eq:hypo_p_a_equivalence}.
Thus the condition number $\kPA = C_{\bo P} / c_{\bo P}$.
Let the matrix $\bo B \coloneqq \left( B_h(\phi_j,\phi_i)\right)$.
Then, the preconditioner $\bo P$ can be used in either the right or left preconditioned GMRES method \cite{Saad2003,Saad1986} for solving \eqref{eq:linearized_nonsymm} as follows. First, we define the norms $\norm{\cdot}_{\bo P}$ and $\norm{\cdot}_{\bo P^{-1}}$ on $\R^{\Dim \Vh}$ by 
\begin{equation}
\begin{aligned}
\norm{\bo v}^2_{\bo P} \coloneqq \bo v^\top \bo P\, \bo v, & & & \norm{\bo v}^2_{\bo P^{-1}} \coloneqq \bo v^\top \bo P^{-1}\, \bo v & & &\forall\,\bo v\in \R^{\Dim \Vh}.\end{aligned}
\end{equation}
Applying $k$-steps of the right preconditioned GMRES method in the $\bo P^{-1}$-inner product computes $\bo u_k$ as the solution of 
\begin{equation}\label{eq:GMRES_right_prec}
\begin{aligned}
\bo u_k = \bo P^{-1}\bo  w_k, & & &
\bo w_k=\argmin_{\bo{\tilde{w}}_k \in \calK_k(\bo B \bo P^{-1},\bo r_0)+\bo w_0} \norm{ \bo B \bo P^{-1} \left( \bo w - \bo{\tilde{w}}_k\right)}_{\bo P^{-1}},
\end{aligned}
\end{equation}
where $\bo r_0$ denotes the initial residual, $\bo w \coloneqq \bo P \bo u$, $\bo w_0\coloneqq \bo P \bo u_0$, and where $\calK(\bo B \bo P^{-1},\bo r_0)$ denotes the $k$-dimensional Krylov subspace generated by $\bo B \bo P^{-1}$ and $\bo r_0$. 
It is well-known that \eqref{eq:GMRES_right_prec} is equivalent to
\begin{equation}\label{eq:GMRES_left_prec}
\bo u_k = \argmin_{\bo{\tilde{u}}_k \in 	\calK_k(\bo P^{-1}\bo B,\bo P^{-1} \bo r_0)+\bo u_0}\norm{ \bo P^{-1}\bo B \left(\bo u - \bo{\tilde{u}}_k\right)}_{\bo P},
\end{equation}
which is obtained after $k$-steps of the left-preconditioned GMRES algorithm in the $\bo P$-inner product, see~\cite{Saad2003}. For a discussion of the implementation of the preconditioned GMRES method in the $\bo P$- and $\bo P^{-1}$-inner products, we refer the reader to \cite[p.~269]{Saad2003}.

It follows from \eqref{eq:a_b_equivalence} and the hypothesis \eqref{eq:hypo_p_a_equivalence} that $\bo B$ is also coercive and bounded in the norm defined by $\bo P$: for any $\bo v$ and $\bo w\in \R^{\Dim\Vh}$, we have
\[
\begin{aligned}
\norm{\bo v}_{\bo P}^2 \leq \frac{1}{c_{\bo P}c_B} \bo v^\top \bo B \bo v, & & & \abs{ \bo v^\top \bo B \bo w } \leq  C_{\bo P} C_B \norm{\bo v}_{\bo P} \norm{\bo w}_{\bo P}.
\end{aligned}
\]
This enables us to appeal to the following well-known bound from GMRES convergence theory~\cite{Eisenstat1983}.

\begin{theorem}\label{thm:GMRES_bound}
Let $\bo u\in \R^{\Dim\Vh}$ be the vector representing the solution of \eqref{eq:linearized_nonsymm}. For each $k\geq 1$, let $\bo u_k$ be defined by \eqref{eq:GMRES_right_prec} or equivalently by \eqref{eq:GMRES_left_prec}, with associated residual $\bo r_k$. Then
\begin{equation}\label{eq:GMRES_bound}
\frac{\norm{\bo r_k}_{\bo P^{-1}}}{\norm{\bo r_0}_{\bo P^{-1}}} =  \frac{\norm{ \bo P^{-1} \bo r_k}_{\bo P}}{\norm{\bo P^{-1} \bo r_0}_{\bo P}}
 \leq \left(1  - \frac{c_{\bo P}^2 c_B^2 }{ C_{\bo P}^2 C_B^2} \right)^{k/2} = \left(1- \frac{1}{\kPA^2}\frac{c_B^2}{C_B^2}\right)^{k/2}.
\end{equation}
\end{theorem}
The bound~\eqref{eq:GMRES_bound} and the coercivity of $\bo B$ imply  the following bound for the error:
\[
\norm{\bo u - \bo u_k}_{\bo P}^2 \leq \frac{1}{c_{\bo P} c_{B}} (\bo u - \bo u_k)^\top \bo r_k \leq \frac{1}{c_{\bo P}c_{B}} \norm{\bo u - \bo u_k}_{\bo P}\norm{\bo r_k}_{\bo P^{-1}},
\]
thereby implying that
\begin{equation}\label{eq:GMRES_converge}
\norm{\bo u - \bo u_k}_{\bo P} \leq \frac{\norm{\bo r_0}_{\bo P^{-1}}}{c_{\bo P}c_{B}} \left(1-\frac{c_B^2}{C_B^2\, \kPA^2}\right)^{k/2}.
\end{equation}
The bound \eqref{eq:GMRES_converge} gives a guaranteed minimum convergence rate for GMRES in the $\bo P$-norm, which is equivalent to the $a_h$-norm up to the condition number $\kPA$. 
We recall that $a_h$ defines a norm equivalent to $\normh{\cdot}$, which is the norm of interest, as shown by Lemma~\ref{lem:a_h_coercivity}.
This strongly suggests that the $\bo P^{-1}$-norm, as opposed to the Euclidean norm, of the residual is a natural objective to be minimized by GMRES, as in \eqref{eq:GMRES_right_prec} and \eqref{eq:GMRES_left_prec}.

The conclusion from \eqref{eq:GMRES_bound} and \eqref{eq:GMRES_converge} is that, if $\bo P$ is a robust preconditioner for $\bo A$ in the sense of yielding uniformly bounded condition numbers with respect to the parameters being varied, then $\bo P$ will also be a robust preconditioner for the nonsymmetric problems arising from linearizations of HJB equations. 
In section~\ref{sec:preconditioners}, we construct a specific symmetric positive definite preconditioner~$\bo P$, based on a nonoverlapping domain decomposition method, that will be used to solve \eqref{eq:semi_newton_scheme}.

\begin{remark}\label{rem:GMRES_descriptive}
The general preconditioning strategy proposed here was largely motivated by the analysis in {\upshape\cite{Loghin2004}}.
It is well-known {\upshape\cite{Wathen2015}} that convergence bounds for GMRES, such as \eqref{eq:GMRES_bound}, need not be descriptive of the convergence rate obtained in practice, i.e.\ GMRES may perform significantly better than what is predicted by \eqref{eq:GMRES_bound} alone. In particular this is observed in some of the experiments of section~{\upshape\ref{sec:numexp3}} below.
This implies that the efficiency of the preconditioners must generally be assessed from computations.
\end{remark}

\subsection{Condition number of the unpreconditioned problem}
The condition number of the matrix $\bo A\coloneqq\left(a_h(\phi_i,\phi_j)\right)$ depends on the choice of basis for $\Vh$. However, in practice, the basis is often chosen to be either a nodal basis or a mapped orthonormal basis. For example, let us assume that each basis function $\phi_i$ of $\Vh$ has support in only one element, and is mapped from a member of a set of functions that are $L^2$-orthonormal on a reference element. Then, arguments that are similar to those in \cite{Antonietti2011} show that the $\ell^2$-norm condition number $\kappa(\bo A)$ of the matrix $\bo A\coloneqq\left(a_h(\phi_i,\phi_j)\right)$ satisfies
\begin{equation}\label{eq:unpreconditioned_bound}
\kappa\left(\bo A\right) \lesssim \max_{K\in\calT_h} \frac{p_K^8}{h_K^4} \,\frac{\max_{K\in\calT_h} h_K^{\dim}}{\min_{K\in\calT_h} h_K^{\dim}},
\end{equation}
where it is recalled that $\dim$ is the dimension of the domain $\Om$.

\section{Domain decomposition preconditioners}\label{sec:preconditioners}
Let $\Om$ be partitioned into a set $\calT_{S}\coloneqq\{\Om_i\}_{i=1}^N$ of nonoverlapping Lipschitz polytopal subdomains $\Om_i$. The partition $\calT_S$ is assumed to be conforming. A coarse simplicial or parallelepipedal mesh $\calT_H$ is associated to each fine mesh $\calT_h$. Let $H_D\coloneqq\diam D$ for each $D\in \calT_H$ and suppose that $H\coloneqq\max_{D\in\calT_H} H_D$. It is required that the sequence of meshes $\{\calT_H\}_H$ satisfy the mesh conditions of section~\ref{sec:definitions}. Furthermore, the partitions $\calT_S$, $\calT_H$ and $\calT_h$ are assumed to be \emph{nested}, in the sense that no face of $\calT_S$, respectively $\calT_H$, cuts the interior of an element of $\calT_H$, respectively $\calT_h$. Hence, each element $D\in\calT_H$ satisfies $\overline{D} = \bigcup \overline{K}$, where the union is over all elements $K\in\calT_h$ such that $K\subset D$.

For each mesh $\calT_H$, let $\bo q\coloneqq\left( q_D\colon D\in\calT_H\right)$ be a vector of \emph{positive} integers; so $q_D\geq 1$ for each element $D\in\calT_H$. Assume that $\bo q$ satisfies the bounded variation property of \eqref{eq:c_p_bound}, and that $q_D \leq \min_{K\subset D} p_K$ for all $D\in\calT_H$.
For each $D\in \calT_H$, define the sets
\begin{equation}
\begin{aligned}
 \calT_h(D) &\coloneqq\left\{K \in \calT_h\colon K\subset D\right\}, &
 \calF_h^i(D)&\coloneqq\left\{F\in\calF_h^i\colon F\subset D\right\},
\\ \calF_h^i(\p D)&\coloneqq\left\{F\in\calF_h^i\colon F\subset \p D\right\}, &
 \calFhib(\p D) &\coloneqq \{F\in\calFhib \colon F\subset \p D \}.
\end{aligned}
\end{equation}
Although the sets $\calF_h^i(D)$ and $\calFhib(D)$ are not disjoint, the above assumptions on the meshes imply that $\calFhib = \bigcup_D \calF_h^i(D)\cup \calFhib(\p D)$ and that $\calF_h^i = \bigcup_D \calF_h^i(D)\cup \calF_h^i(\p D)$.
Define the function spaces
\begin{subequations}
\begin{align}
\Vh^i &\coloneqq \left\{ v \in L^2(\Om_i)\colon \eval{v}{K}\in \calP_{p_K}(K)\quad\forall\,K\in\calT_h,K\subset \Om_i\right\},\quad 1\leq i \leq N, \\ 
\VH &\coloneqq \left\{ v\in L^2(\Om)\colon \eval{v}{D}\in \calP_{q_D}(D)\quad\forall\, D\in\calT_H\right\}.
\end{align}
\end{subequations}
For convenience of notation, let $\Vh^0\coloneqq\VH$.
It follows from the above conditions on the meshes that every function $v_H \in \VH$ also belongs to $\Vh$, so let $I_0 \colon \VH \tends \Vh$ denote the natural imbedding map.
For $1\leq i \leq N$, let $I_i\colon \Vh^i\tends \Vh$ denote the natural injection operator defined by
\begin{equation}\label{eq:injection_operator_i}
I_i \,v_i \coloneqq \begin{cases}
 v_i &\text{on }\Om_i,
 \\ 0 &\text{on }\Om-\Om_i,
 \end{cases}
 \qquad\forall\, v_i \in \Vh^i.
\end{equation}
Then, any function $v_h \in \Vh$ can be decomposed as
$
v_h = \sum_{i=1}^N I_i \left(\eval{v_h}{\Om_i}\right).
$
Let the bilinear forms $a_h^i\colon\Vh^i\times\Vh^i\tends \R$, $0\leq i \leq N$, be defined by
\begin{align}
a_h^i(u_i,v_i)&\coloneqq a_h(I_i\,u_i,I_i\,v_i)\qquad \forall \, u_i, v_i\in \Vh^i.
\end{align}
It is clear that the bilinear forms $a_h^i$ are symmetric and coercive on $\Vh^i\times \Vh^i$.
\cb{For each $0 \leq i \leq N$, let $\bo A_i$ denote the matrix that corresponds to the bilinear form $a_h^i$ and let $\bo I_i$ denotes the matrix corresponding to the injection operator $I_i$. Therefore, for each $0\leq i \leq N$, the matrix $\bo A_i$ has dimension $\Dim \Vh^i \times \Dim \Vh^i$, and the matrix $\bo I_i$ has dimension $\Dim \Vh \times \Dim \Vh^i$.
Then, we define $\bo P_{i}^{-1} \coloneqq \bo{I}_i\, \bo A_i^{-1} \,\bo{I}_i^{\top}$, which therefore has dimension $\Dim \Vh \times \Dim \Vh$.}

The additive Schwarz preconditioner $\bo P$ is defined in terms of its inverse by
\begin{equation}
	\bo P^{-1} \coloneqq\sum_{i=0}^N \bo P_i^{-1}.
\end{equation}
Thus $\bo P^{-1}$ defines a symmetric positive definite preconditioner $\bo P$ that may be used as explained in section~\ref{sec:lin_hjb}.
Further preconditioners, such as multiplicative, symmetric multiplicative and hybrid methods, are presented in \cite{Smith1996,Toselli2005} and the references therein.
The general theory of Schwarz methods \cite{Smith1996,Toselli2005} simplifies the analysis of these preconditioners to the verification of three key properties.
\begin{property}\label{prop:stable_decomposition}
Suppose that there exists a positive constant $c_0$ such that each $v_h\in\Vh$ admits a decomposition $v_h = \sum_{i=0}^N I_i\, v_i$, with $v_i\in\Vh^i$, for each $0\leq i \leq N$, with
\begin{equation}\label{eq:stable_decomposition}
\sum_{i=0}^N a_h^i(v_i,v_i) \leq c_0\, a_h(v_h,v_h).
\end{equation}
\end{property}

\begin{property}\label{prop:strengthened_cauchy}
Assume that there exist constants $\eps_{ij}\in\left[0,1\right]$, such that
\begin{equation}\label{eq:strengthened_cauchy}
\abs{a_h(I_i\, v_i, I_j v_j)}\leq \eps_{ij} \sqrt{a_h(I_i \,v_i,I_i\, v_i)\; a_h(I_j \,v_j,I_j\, v_j)}, 
\end{equation}
for all $v_i\in\Vh^i$ and all $v_j\in\Vh^j$,  $1\leq i,j\leq N$. Let $\rho(\mathcal{E})$ denote the spectral radius of the matrix $\mathcal{E}\coloneqq(\eps_{ij})$.
\end{property}

\begin{property}\label{prop:local_stability}
Suppose that there exists a constant $\omega \in (0,2)$, such that
\begin{equation}\label{eq:equivalence_local_operators}
a_h(I_i\, v_i, I_i\, v_i) \leq \omega \,a_h^i(v_i,v_i)\quad\forall\, v_i\in\Vh^i,\; 0\leq i \leq N.
\end{equation}
\end{property}

Properties~\ref{prop:stable_decomposition}--\ref{prop:local_stability} are sometimes referred to respectively as the stable decomposition property, the strengthened Cauchy--Schwarz inequality, and local stability.

The following theorem from the theory of Schwarz methods is quoted from \cite{Toselli2005}.
\begin{theorem}\label{thm:schwarz_condition}
If Properties~\ref{prop:stable_decomposition}--\ref{prop:local_stability} hold, then the condition number $\kPA$ obtained by the additive Schwarz preconditioner satisfies
\begin{equation}
\kPA \leq c_0 \,\omega \left(\rho(\mathcal{E})+1\right).
\end{equation}
\end{theorem}
\begin{remark}\label{rem:schwarz_constants}
With the above choices of bilinear forms $a_h^i$ and with the arguments presented in {\normalfont \cite{Antonietti2011}}, it is seen that \eqref{eq:equivalence_local_operators} holds in fact with equality for $\omega =1$. Also, in \eqref{eq:strengthened_cauchy}, we can take $\eps_{ij}=1$ if $\p\Om_i \cap \p \Om_j \neq \emptyset$, and $\eps_{ij}=0$ otherwise. Therefore, as explained in {\normalfont\cite{Antonietti2011}}, $\rho\left(\mathcal{E}\right) \leq N_c+1$, where $N_c$ is the maximum number of adjacent subdomains that a given subdomain might have. Therefore, Properties~{\upshape\ref{prop:strengthened_cauchy}}~and~{\upshape\ref{prop:local_stability}} hold, and it remains to verify Property~\upshape{\ref{prop:stable_decomposition}}.
\end{remark}

The following theorem determines a bound on the constant appearing in \eqref{eq:stable_decomposition}, which can be used in conjunction with Theorem~\ref{thm:schwarz_condition} to analyse the properties of the preconditioners. The proof of this result is given in the following sections. 

\begin{theorem}\label{thm:stable_decomposition}
 Let $\Om\subset \R^{\dim}$, $\dim\in\{2,3\}$, be a bounded convex polytopal domain, and let $\calT_S$, $\{\calT_H\}_H$ and $\{\calT_h\}_h$ be successively nested shape-regular sequences of meshes, with $\calT_S$ conforming, and $\{\calT_H\}_H$ and $\{\calT_h\}_h$ satisfying \eqref{eq:card_F_bound}, \eqref{eq:c_h_bound} and \eqref{eq:c_p_bound}. Let $\mu_F$ and $\eta_F$ satisfy \eqref{eq:eta_mu} for each face $F$, with $\cmu$ and $\ceta$ chosen to satisfy the hypothesis of Lemma~{\upshape\ref{lem:a_h_coercivity}}. Then, each $v_h\in\Vh$ admits a decomposition $v_h = \sum_{i=0}^N I_i\, v_i$, with $v_i\in\Vh^i$, $0\leq i \leq N$, such that
\begin{equation}
\sum_{i=0}^N a_h^i(v_i,v_i) \lesssim \tilde{c}_0 \, a_h(v_h,v_h), \\
\end{equation}
where the constant $\tilde{c}_0$ is given by
\begin{multline}\label{eq:c_0_constant}
 \tilde{c}_0  \coloneqq1 + \max_{D\in\calT_H} \left[\frac{q_D}{H_D}\max_{K \in \calT_h(D)} \frac{p_K^2}{h_K}\right]\max_{D\in\calT_H}\frac{H_D^2}{q_D^2}
 \\ + \max_{D\in\calT_H}\left[ \frac{q_D}{H_D} \max_{K\in\calT_h(D)} \frac{p_K^6}{h_K^3}\right]\max_{D\in\calT_H}\frac{H^4_D}{q_D^4}.
\end{multline}
\end{theorem}

It follows from Theorems~\ref{thm:schwarz_condition}~and~\ref{thm:stable_decomposition} that the condition number satisfies
\begin{equation}\label{eq:condition_preconditioned}
\kPA \lesssim \tilde{c}_0\left(N_c+2\right) ,
\end{equation}
where $\tilde{c}_0$ is given in \eqref{eq:c_0_constant} above, and $N_c$ is the maximum number of adjacent subdomains that a given subdomain from $\calT_S$ might have.
\cb{Thus the condition number does not depend on the number $N$ of subdomains, but may depend on the maximum number of neighbours any subdomain possesses, denoted by $N_c$ in \eqref{eq:condition_preconditioned}.}
If the sequence of coarse spaces $\{\VH\}_H$ satisfy the assumption that $H_D/q_D \lesssim \min_{D\in\calT_H} H_D/q_D$ for all $D\in\calT_H$, then the constant $\tilde{c}_0$ in the above proposition simplifies to
\begin{equation}
 \tilde{c}_0 \simeq 1+ \max_{D\in\calT_H}\left[ \frac{H_D}{q_D} \max_{K\in\calT_h(D)} \frac{p_K^2}{h_K} + \frac{H^3_D}{q_D^3} \max_{K\in\calT_h(D)} \frac{p_K^6}{h_K^3}\right].
\end{equation}
Moreover, if the sequences of meshes $\{\calT_H\}_H$ and $\{\calT_h\}_h$ are quasiuniform, and if the polynomial degrees are also quasiuniform in the sense that $q\coloneqq \max_D q_D \lesssim q_D$ for all $D \in \calT_H$ and $p\coloneqq\max_K p_K\lesssim p_K$ for all $K\in\calT_h$, then the condition number of the preconditioned system satisfies the bound
\begin{equation}\label{eq:predicted_condition_number}
 \kPA \lesssim \left(N_c+2\right)\left( 1+\frac{p^2 \, H }{q\, h}+\frac{p^6 \, H^3 }{q^3\, h^3 }\right).
\end{equation}
It is well-known that the above bound is optimal in terms of the powers of $H$ and $h$, see \cite{Brenner2005,Feng2005}. The numerical experiments of section~\ref{sec:numexp} show that the bound \eqref{eq:predicted_condition_number} is also sharp in terms of the orders of $p$ and $q$.
Choosing the coarse space such that $H\simeq h$ and $q\simeq p$ implies that $\kPA\lesssim p^3$, which shows that the preconditioner is robust with respect to $h$ but not with respect to $p$. The explicit dependence of our bound on $q$ shows nonetheless  a significant improvement over the condition number of order $p^8/h^4$ for the unpreconditioned matrix.
\cb{The preconditioner $\bo P$ can therefore be used to precondition the nonsymmetric systems~\eqref{eq:semi_newton_scheme} of the semismooth Newton method, where the convergence is guaranteed by Theorem~\ref{thm:GMRES_bound} in combination with \eqref{eq:predicted_condition_number}.}

\section{Approximation of discontinuous functions}\label{sec:approximability}
As explained in the introduction, the optimal bound for the condition numbers, as given by Theorem~\ref{thm:stable_decomposition}, rests upon the optimality of approximation properties between coarse and fine spaces. Therefore, in this section, we first determine how closely a function in $\Vh$ can be approximated by functions in $H^2(\Om)\cap H^1_0(\Om)$. This leads to an approximation result for functions in $\Vh$ by functions in $\VH$ that is of optimal order in both the coarse mesh size and polynomial degree. 

\subsection{Lifting operators}
Let $\Vh^{\dim}$ denote the space of $\dim$-dimensional vector fields with components in $\Vh$.
Let ${\bo r_h} \colon L^2(\calFhib) \tends \Vh^{\dim}$ and $r_h\colon L^2(\calF_h^i) \tends \Vh$ be defined by
\begin{align}
 \sum_{K\in\calT_h} \pair{{\bo r_h}(w)}{\bo v_h }_K &  = \sum_{F\in\calFhib} \pair{w}{ \lla \bo v_h \cdot n_F \rra}_F \qquad \forall \,\bo v_h \in \Vh^{\dim},\\
 \sum_{K\in\calT_h} \pair{r_h( w )}{v_h}_K & = \sum_{F\in\calF_h^i} \pair{ w }{ \lla v_h \rra}_F\qquad \forall\, v_h \in \Vh.
\end{align}
The following result is well-known; for instance, see \cite{Antonietti2011} for a proof.
\begin{lemma}\label{lem:lifting_stability}
 Let $\Om$ be a bounded Lipschitz domain and let $\{\calT_h\}_h$ be a shape-regular sequence of meshes satisfying \eqref{eq:card_F_bound}, \eqref{eq:c_h_bound} and \eqref{eq:c_p_bound}. Then, the lifting operators satisfy the following bounds:
\begin{subequations}
\begin{alignat}{2}
\sum_{K\in\calT_h} \norm{ {\bo r_h}(w) }_{L^2(K)}^2 &\lesssim \sum_{F\in\calFhib} \frac{\tilde{p}^2_F}{\tilde{h}_F} \norm{w}_{L^2(F)}^2& \qquad &\forall\, w \in L^2(\calFhib),\label{eq:jump_lift_stability}
\\ \sum_{K\in\calT_h}\norm{ r_h(w) }_{L^2(K)}^2 &\lesssim \sum_{F\in\calF_h^i} \frac{\tilde{p}^2_F}{\tilde{h}_F} \norm{w}_{L^2(F)}^2&\qquad &\forall\, w \in L^2(\calF_h^i).\label{eq:normal_lift_stability}
\end{alignat}
\end{subequations}
\end{lemma}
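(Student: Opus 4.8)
The plan is to test the two defining identities against the lifting functions themselves, and then to bound the resulting face integrals by an $hp$-version discrete trace inequality combined with the mesh conditions. I will carry out the argument for $\bo r_h$; the bound for $r_h$ is obtained by the identical steps with $\calFhi$ in place of $\calFhib$ and the scalar average $\lla v_h \rra$ in place of the normal-component average $\lla \bo v_h\cdot n_F\rra$.

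The first ingredient is the discrete trace inequality: there is a constant depending only on the shape-regularity of $\{\calT_h\}_h$ such that $\norm{v_h}_{L^2(F)}^2\lesssim p_K^2\,h_K^{-1}\,\norm{v_h}_{L^2(K)}^2$ for every $K\in\calT_h$, every face $F\subset\p K$ and every $v_h\in\calP_{p_K}(K)$. Since $\tilde h_F\leq h_K$ and $p_K\leq\tilde p_F$ whenever $F$ is a face of $K$ (immediate from \eqref{eq:def_tilde_ph}), summing the at most two element contributions to a face average gives, for all $\bo v_h\in\Vh^{\dim}$ and all $F\in\calFhib$,
\[
\norm{\lla \bo v_h\cdot n_F\rra}_{L^2(F)}^2\lesssim \frac{\tilde p_F^2}{\tilde h_F}\sum_{K\in\calT_h,\,F\subset\p K}\norm{\bo v_h}_{L^2(K)}^2.
\]
Now take $\bo v_h=\bo r_h(w)$ in the definition of $\bo r_h$ and apply the Cauchy--Schwarz inequality first on each face and then, with the weights $\tilde p_F^2/\tilde h_F$, across the faces:
\begin{align*}
\norm{\bo r_h(w)}_{L^2(\Om)}^2 &=\sum_{F\in\calFhib}\pair{w}{\lla \bo r_h(w)\cdot n_F\rra}_F\\
&\leq\Bigl(\sum_{F\in\calFhib}\frac{\tilde p_F^2}{\tilde h_F}\norm{w}_{L^2(F)}^2\Bigr)^{1/2}\Bigl(\sum_{F\in\calFhib}\frac{\tilde h_F}{\tilde p_F^2}\norm{\lla \bo r_h(w)\cdot n_F\rra}_{L^2(F)}^2\Bigr)^{1/2}\\
&\lesssim\Bigl(\sum_{F\in\calFhib}\frac{\tilde p_F^2}{\tilde h_F}\norm{w}_{L^2(F)}^2\Bigr)^{1/2}\Bigl(\sum_{K\in\calT_h}\card\{F\in\calFhib\colon F\subset\p K\}\,\norm{\bo r_h(w)}_{L^2(K)}^2\Bigr)^{1/2},
\end{align*}
where the last step inserts the displayed trace bound and reorders the double sum over faces and their adjacent elements. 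By \eqref{eq:card_F_bound} the last factor is at most $\cf^{1/2}\norm{\bo r_h(w)}_{L^2(\Om)}$; dividing through by $\norm{\bo r_h(w)}_{L^2(\Om)}$ (the estimate being trivial when it vanishes) and squaring yields \eqref{eq:jump_lift_stability}, and the analogous computation gives \eqref{eq:normal_lift_stability}.

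Since this is a classical fact, I do not expect a genuine obstacle: the only points requiring care are the consistent passage from the elementwise quantities $h_K,p_K$ to the face quantities $\tilde h_F,\tilde p_F$ and the use of \eqref{eq:card_F_bound} to convert the face-indexed sum of elementwise $L^2$-norms back into the global norm $\norm{\cdot}_{L^2(\Om)}$. The geometry of $\Om$ enters only through guaranteeing validity of the local discrete trace inequality.
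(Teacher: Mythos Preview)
Your argument is correct and is precisely the standard proof of this well-known stability bound: test the defining identity against the lifting itself, apply Cauchy--Schwarz with the weights $\tilde p_F^2/\tilde h_F$, invoke the $hp$-discrete trace inequality to pass from face to element norms, and use \eqref{eq:card_F_bound} to collapse the resulting sum. The paper itself does not give a proof but simply cites \cite{Antonietti2011}, where essentially the same argument appears; your write-up is a faithful reconstruction of it, with the passage from $(h_K,p_K)$ to $(\tilde h_F,\tilde p_F)$ handled correctly.
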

For $v_h\in\Vh$ and $\bo v_h\in\Vh^{\dim}$, define $G_h(v_h) \in \Vh^{\dim}$ and $D_h(\bo v_h)\in \Vh$ element-wise by 
\begin{subequations}
\begin{align}
G_h(v_h)|_K &\coloneqq \nabla v_h|_K - {\bo r_h}(\llb v_h \rrb)|_K, \label{eq:G_h_def}\\
D_h(\bo v_h)|_K &\coloneqq \Div \bo v_h|_K - r_h(\llb \bo v_h \cdot n_F\rrb)|_K,
\end{align}
\end{subequations}
for all $K\in \calT_h$. Observe that $D_h(\bo v_h)$ belongs to $L^2(\Om)$ for any $\bo v_h \in \Vh^{\dim}$.

\begin{lemma}\label{lem:rhj_estimates}
Let $\Om$ be a bounded Lipschitz polytopal domain, and let $\left\{\calT_h\right\}_h$ be a shape-regular sequence of meshes satisfying \eqref{eq:card_F_bound}, \eqref{eq:c_h_bound} and \eqref{eq:c_p_bound}. Let $\eta_F$ and $\mu_F$ satisfy \eqref{eq:eta_mu} for all $F\in\calFhib$. Then, for any $v_h \in \Vh$, we have
\begin{subequations}
\begin{gather}
 \sum_{K\in\calT_h} \frac{p^4_K}{h_K^2} \norm{{\bo r_h}(\llb v_h \rrb)}_{L^2(K)}^2 \lesssim \absj{v_h}^2, \label{eq:rhj_estimate_2}
\\ \sum_{K\in\calT_h} \abs{ {\bo r_h}(\llb v_h \rrb)}_{H^1(K)}^2 + \sum_{F\in\calF_h^i} \mu_F \norm{ \llb {\bo r_h}(\llb v_h \rrb) \cdot n_F \rrb }_{L^2(F)}^2   \lesssim \absj{v_h}^2.\label{eq:rhj_estimate_3}
\end{gather}
\end{subequations}
\end{lemma}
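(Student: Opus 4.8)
The plan is to establish the weighted $L^2$-bound \eqref{eq:rhj_estimate_2} first, and then deduce \eqref{eq:rhj_estimate_3} from it by means of standard $hp$-version inverse inequalities. The essential point for \eqref{eq:rhj_estimate_2} is that the lifting operator ${\bo r_h}$ acts locally: if a datum $w\in L^2(\calFhib)$ is supported on a single face $F$, then, testing the identity defining ${\bo r_h}(w)$ against vector fields ${\bo v_h}\in\Vh^{\dim}$ supported on an element not having $F$ on its boundary, one sees that ${\bo r_h}(w)$ vanishes outside the one or two elements to which $F$ belongs. I would therefore decompose $\llb v_h\rrb=\sum_{F\in\calFhib}w_F$, with $w_F$ equal to $\llb v_h\rrb$ on $F$ and to zero on the remaining faces, write ${\bo r_h}(\llb v_h\rrb)=\sum_{F}{\bo r_h}(w_F)$, and apply Lemma~\ref{lem:lifting_stability} to each single-face term to get $\norm{{\bo r_h}(w_F)}_{L^2(\Om)}^2\lesssim(\tilde p_F^2/\tilde h_F)\norm{\llb v_h\rrb}_{L^2(F)}^2$. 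For a fixed $K\in\calT_h$, only the terms ${\bo r_h}(w_F)$ with $F\subset\p K$ contribute on $K$, and there are at most $\cf$ such faces by \eqref{eq:card_F_bound}, so $\norm{{\bo r_h}(\llb v_h\rrb)}_{L^2(K)}^2\lesssim\sum_{F\subset\p K}(\tilde p_F^2/\tilde h_F)\norm{\llb v_h\rrb}_{L^2(F)}^2$.

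Multiplying by $p_K^4/h_K^2$, I would then invoke the mesh and polynomial-degree conditions: for any face $F$ of $K$ one has $\tilde h_F\leq h_K\leq\ch\tilde h_F$ and $p_K\leq\tilde p_F\leq\cp p_K$, by the definitions of $\tilde h_F$, $\tilde p_F$ together with \eqref{eq:c_h_bound} and \eqref{eq:c_p_bound}, whence $p_K^4/h_K^2\simeq\tilde p_F^4/\tilde h_F^2$ and consequently $(p_K^4/h_K^2)(\tilde p_F^2/\tilde h_F)\simeq\tilde p_F^6/\tilde h_F^3\simeq\eta_F$ by \eqref{eq:eta_mu}. Summing over $K\in\calT_h$ and using that every face lies on the boundary of at most two elements then gives $\sum_{K\in\calT_h}(p_K^4/h_K^2)\norm{{\bo r_h}(\llb v_h\rrb)}_{L^2(K)}^2\lesssim\sum_{F\in\calFhib}\eta_F\norm{\llb v_h\rrb}_{L^2(F)}^2\leq\absj{v_h}^2$, which is \eqref{eq:rhj_estimate_2}.

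For \eqref{eq:rhj_estimate_3}, since ${\bo r_h}(\llb v_h\rrb)|_K\in\calP_{p_K}(K)^{\dim}$, the inverse inequality $\abs{w_h}_{H^1(K)}\lesssim(p_K^2/h_K)\norm{w_h}_{L^2(K)}$, applied componentwise, bounds $\sum_{K}\abs{{\bo r_h}(\llb v_h\rrb)}_{H^1(K)}^2$ by $\sum_{K}(p_K^4/h_K^2)\norm{{\bo r_h}(\llb v_h\rrb)}_{L^2(K)}^2$, which is controlled by \eqref{eq:rhj_estimate_2}. For the interior-face term, let $F\in\calF_h^i$ with $F=\p K\cap\p K^\prime$; the discrete trace inequality $\norm{\tau_F(w_h|_K)}_{L^2(F)}^2\lesssim(p_K^2/h_K)\norm{w_h}_{L^2(K)}^2$ gives $\norm{\llb{\bo r_h}(\llb v_h\rrb)\cdot n_F\rrb}_{L^2(F)}^2\lesssim(p_K^2/h_K)\norm{{\bo r_h}(\llb v_h\rrb)}_{L^2(K)}^2+(p_{K^\prime}^2/h_{K^\prime})\norm{{\bo r_h}(\llb v_h\rrb)}_{L^2(K^\prime)}^2$, and since $\mu_F=\cmu\tilde p_F^2/\tilde h_F\simeq p_K^2/h_K\simeq p_{K^\prime}^2/h_{K^\prime}$ by the face conditions again, it follows that $\mu_F\norm{\llb{\bo r_h}(\llb v_h\rrb)\cdot n_F\rrb}_{L^2(F)}^2\lesssim(p_K^4/h_K^2)\norm{{\bo r_h}(\llb v_h\rrb)}_{L^2(K)}^2+(p_{K^\prime}^4/h_{K^\prime}^2)\norm{{\bo r_h}(\llb v_h\rrb)}_{L^2(K^\prime)}^2$. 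Summing over $F\in\calF_h^i$, with each element appearing at most $\cf$ times, and applying \eqref{eq:rhj_estimate_2} once more completes the proof.

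The argument is largely bookkeeping once the right ingredients are in place; the step I expect to be the heart of the matter — and the only one needing genuine care — is the passage from the \emph{weighted} estimate \eqref{eq:rhj_estimate_2} to the \emph{unweighted}, global $L^2$-bound supplied by Lemma~\ref{lem:lifting_stability}. This is precisely where the elementwise local support of ${\bo r_h}$, the finite-overlap property \eqref{eq:card_F_bound}, and the equivalences $h_K\simeq\tilde h_F$, $p_K\simeq\tilde p_F$ across faces must be combined; everything else, namely the $H^1$ inverse inequality and the discrete trace inequality, is classical for $hp$-version discontinuous finite element spaces on shape-regular meshes.
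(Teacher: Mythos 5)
Your proof is correct, and it reaches \eqref{eq:rhj_estimate_2} by a genuinely different route from the paper. The paper's argument is a self-absorbing duality estimate: it tests the defining identity of $\bo r_h$ against the weighted field $\bo v_h = (p^4/h^2)\bo r_h(\llb v_h\rrb)$, applies Cauchy--Schwarz on the face terms with the weight pair $\tilde h_F^3/\tilde p_F^6$ and $\tilde p_F^6/\tilde h_F^3$, and then uses trace and inverse inequalities to reproduce the weighted left-hand side on the right, from which \eqref{eq:rhj_estimate_2} follows by cancellation. You instead decompose $\llb v_h\rrb=\sum_F w_F$ into single-face contributions, observe that $\bo r_h(w_F)$ has local support on the elements adjacent to $F$, and apply the global stability bound of Lemma~\ref{lem:lifting_stability} face by face; the weighted estimate then falls out of the equivalences $h_K\simeq\tilde h_F$, $p_K\simeq\tilde p_F$ across shared faces, together with the finite-overlap condition \eqref{eq:card_F_bound}. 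Both arguments are of comparable length and rest on the same ingredients (trace and inverse inequalities, the mesh and degree bounded-variation assumptions, the definition of $\eta_F$). Your locality-based route is perhaps a little more transparent, since it exhibits directly where the $\eta_F$ weight comes from, whereas the paper's absorption argument is more compact. For \eqref{eq:rhj_estimate_3} you follow essentially the paper's line: the inverse inequality in $H^1$ and the discrete trace inequality reduce everything to \eqref{eq:rhj_estimate_2}.
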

\begin{proof}
 The definition of the lifting operator gives
\begin{multline*}
 \sum_{K\in\calT_h} \frac{p^4_K}{h_K^2} \norm{{\bo r_h}(\llb v_h \rrb)}_{L^2(K)}^2  = \sum_{F\in\calFhib} \int_F \llb v_h \rrb \lla \frac{p^4}{h^2} {\bo r_h}(\llb v_h \rrb)\cdot n_F \rra \,\d s
\\ \lesssim \left( \sum_{F\in\calFhib} \frac{\tilde{h}_F^3}{\tilde{p}_F^6} \frac{\tilde{p}_F^8}{\tilde{h}_F^4} \norm{{\bo r_h}(\llb v_h \rrb)}_{L^2(F)}^2 \right)^{\frac{1}{2}} \absj{v_h}.
\end{multline*}
The trace and inverse inequalities then yield
\begin{equation*}
  \sum_{K\in\calT_h} \frac{p^4_K}{h_K^2} \norm{{\bo r_h}(\llb v_h \rrb)}_{L^2(K)}^2 \lesssim \left( \sum_{K\in\calT_h} \frac{p_K^4}{h_K^2} \norm{{\bo r_h}(\llb v_h \rrb)}_{L^2(K)}^2 \right)^{\frac{1}{2}} \absj{v_h},
\end{equation*}
which implies \eqref{eq:rhj_estimate_2}. The bound \eqref{eq:rhj_estimate_3} then follows from \eqref{eq:rhj_estimate_2} as a result of the trace and inverse inequalities.
\end{proof}

\begin{corollary}\label{cor:G_h_stability}
 Under the hypotheses of Lemma~\ref{lem:rhj_estimates}, every $v_h \in \Vh$ satisfies
\begin{equation}\label{eq:G_h_stability}
 \sum_{K\in\calT_h} \abs{G_h(v_h)}_{H^1(K)}^2 + \sum_{F\in\calF_h^i} \mu_F \norm{ \llb G_h(v_h) \cdot n_F \rrb}_{L^2(F)}^2 \lesssim \normh{v_h}^2.
\end{equation}
We also have $\norm{D_h(G_h(v_h))}_{L^2(\Om)} \lesssim \normh{v_h}$ for every $v_h \in\Vh$.
\end{corollary}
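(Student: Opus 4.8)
The goal is Corollary~\ref{cor:G_h_stability}, which has two parts: first the bound \eqref{eq:G_h_stability} on $G_h(v_h) = \nabla v_h - \bo r_h(\llb v_h\rrb)$, and second the $L^2$-bound on $D_h(G_h(v_h))$. For the first part, the plan is to use the triangle inequality on the definition of $G_h$. The contribution of $\nabla v_h$ is immediate: $\sum_K \abs{\nabla v_h}_{H^1(K)}^2 \le \sum_K \norm{v_h}_{H^2(K)}^2 \le \normh{v_h}^2$, and for the jump term $\sum_{F\in\calFhi}\mu_F\norm{\llb\nabla v_h\cdot n_F\rrb}_{L^2(F)}^2 \le \absj{v_h}^2 \le \normh{v_h}^2$ directly from the definition of the jump seminorm. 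The contribution of $\bo r_h(\llb v_h\rrb)$ is exactly what Lemma~\ref{lem:rhj_estimates} controls: \eqref{eq:rhj_estimate_3} gives $\sum_K\abs{\bo r_h(\llb v_h\rrb)}_{H^1(K)}^2 + \sum_{F\in\calFhi}\mu_F\norm{\llb\bo r_h(\llb v_h\rrb)\cdot n_F\rrb}_{L^2(F)}^2 \lesssim \absj{v_h}^2 \le \normh{v_h}^2$. Adding the two contributions via $(a+b)^2\le 2a^2+2b^2$ in each term yields \eqref{eq:G_h_stability}.

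For the second part, I would expand $D_h(G_h(v_h)) = \Div G_h(v_h) - r_h(\llb G_h(v_h)\cdot n_F\rrb)$ and bound each piece in $L^2(\Om)$. The divergence term splits as $\Div\nabla v_h - \Div\bo r_h(\llb v_h\rrb)$; the first summand is the broken Laplacian, bounded in $L^2(\Om)$ by $\norm{v_h}_{H^2(\Om;\calT_h)} \le \normh{v_h}$ (each second-order derivative is controlled elementwise), and $\norm{\Div\bo r_h(\llb v_h\rrb)}_{L^2(\Om)}^2 \le \dim\sum_K\abs{\bo r_h(\llb v_h\rrb)}_{H^1(K)}^2 \lesssim \absj{v_h}^2$ by \eqref{eq:rhj_estimate_3} again. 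For the lifting term, apply the stability bound \eqref{eq:normal_lift_stability}: $\norm{r_h(\llb G_h(v_h)\cdot n_F\rrb)}_{L^2(\Om)}^2 \lesssim \sum_{F\in\calFhi}\frac{\tilde p_F^2}{\tilde h_F}\norm{\llb G_h(v_h)\cdot n_F\rrb}_{L^2(F)}^2$, and since $\tilde p_F^2/\tilde h_F \simeq \mu_F/\cmu$ (up to the $\cp,\ch$ constants), this right-hand side is $\lesssim \sum_{F\in\calFhi}\mu_F\norm{\llb G_h(v_h)\cdot n_F\rrb}_{L^2(F)}^2 \lesssim \normh{v_h}^2$ by the first part of the corollary. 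Combining gives $\norm{D_h(G_h(v_h))}_{L^2(\Om)}\lesssim\normh{v_h}$.

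The main obstacle, such as it is, is purely bookkeeping: one must be careful that the jump $\llb G_h(v_h)\cdot n_F\rrb$ appearing inside $r_h$ is precisely the quantity estimated in \eqref{eq:G_h_stability}, and that the face-weight $\mu_F$ from \eqref{eq:eta_mu} is comparable to the weight $\tilde p_F^2/\tilde h_F$ arising from Lemma~\ref{lem:lifting_stability} — this comparability holds with constants depending only on $\cmu$, $\cp$, $\ch$, which are admissible. There is no genuine analytic difficulty; the corollary is essentially an assembly of Lemma~\ref{lem:lifting_stability}, Lemma~\ref{lem:rhj_estimates}, and the definition of $\normh{\cdot}$, so the proof is short.
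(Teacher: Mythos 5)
Your proof is correct. For \eqref{eq:G_h_stability} you and the paper both argue by the triangle inequality applied to $G_h(v_h)=\nabla v_h-\bo r_h(\llb v_h\rrb)$, with the $\nabla v_h$ contributions controlled by $\normh{v_h}$ directly and the $\bo r_h(\llb v_h\rrb)$ contributions controlled by \eqref{eq:rhj_estimate_3}; the paper simply asserts this as an ``easy consequence'' without writing out the terms. For the $D_h(G_h(v_h))$ bound the organization differs slightly: the paper fully expands by linearity of $r_h$ into the four pieces $\Delta v_h$, $\Div\bo r_h(\llb v_h\rrb)$, $r_h(\llb\nabla v_h\cdot n_F\rrb)$, and $r_h(\llb\bo r_h(\llb v_h\rrb)\cdot n_F\rrb)$, bounding each via \eqref{eq:normal_lift_stability} or \eqref{eq:rhj_estimate_3}, whereas you keep $G_h(v_h)$ intact inside $r_h$, apply \eqref{eq:normal_lift_stability} once, and then feed in the already-established face-jump bound from \eqref{eq:G_h_stability}. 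Your version is a bit cleaner in that it reuses the first half of the corollary rather than re-deriving it in pieces, but both routes rest on the same two ingredients, Lemma~\ref{lem:lifting_stability} and Lemma~\ref{lem:rhj_estimates}. One very minor point: the identification $\tilde p_F^2/\tilde h_F=\mu_F/\cmu$ is exact by the definition \eqref{eq:eta_mu}, so no appeal to $\cp$ or $\ch$ is needed for that comparability.
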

\begin{proof}
Inequality \eqref{eq:G_h_stability} is an easy consequence of the definition of $G_h$ in \eqref{eq:G_h_def} and of Lemma~\ref{lem:rhj_estimates}.
For $v_h\in\Vh$ and $K\in \calT_h$, we have
\begin{equation}\label{eq:G_h_stability_1}
 D_h(G_h(v_h))|_K =  \left[\Delta v_h - \Div {\bo r_h}(\llb v_h \rrb) - r_h(\llb \nabla v_h \cdot n_F \rrb) + r_h( \llb {\bo r_h}(\llb v_h \rrb) \cdot n_F\rrb)\right]|_K.
\end{equation}
In view of \eqref{eq:normal_lift_stability}, it is apparent that the global $L^2$-norms over $\Om$ of the first and third terms on the right-hand side of \eqref{eq:G_h_stability_1} are bounded by $\normh{v_h}$, whilst the bounds on the $L^2$-norms of the second and fourth terms follow from \eqref{eq:rhj_estimate_3}.
\end{proof}
\subsection{Approximation by $H^2$-regular functions}
The first step towards the aforementioned approximation result is to consider the discrete analogue of the orthogonality of Helmholtz decompositions.
 \cb{In this section, we shall view the element-wise gradient of a function $v_h \in \Vh$ as an element of $L^2(\Om)^\dim$, and thus we denote it by $\nabla_h v_h$.}

\begin{lemma}\label{lem:discrete_grad_curl_orthogonal}
Let $\Om\subset \R^{\dim}$, $\dim\in\{2,3\}$, be a bounded Lipschitz polytopal domain, and let $\left\{\calT_h\right\}_h$ be a shape-regular sequence of meshes satisfying \eqref{eq:card_F_bound}, \eqref{eq:c_h_bound} and \eqref{eq:c_p_bound}. If $\mu_F$ and $\eta_F$ satisfy \eqref{eq:eta_mu} for every face $F\in\calFhib$, then, for any $v_h \in \Vh$ and any $\psi \in H^1(\Om)^{2\dim-3}$, we have
\begin{equation}\label{eq:discrete_grad_curl_orthogonal_h1}
 \Labs{ \int_\Om G_h(v_h) \cdot \Curl \psi \, \d x } + \Labs{ \int_\Om \nabla_h v_h \cdot \Curl \psi \, \d x } \lesssim \max_{K\in\calT_h} \frac{h_K}{p^{3/2}_K} \absj{v_h}\norm{\psi}_{H^1(\Om)}.
\end{equation}
\end{lemma}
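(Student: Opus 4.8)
The plan is to bound the two integrals simultaneously by first reducing the $G_h(v_h)$ term to the $\nabla v_h$ term and then evaluating $\int_\Om\nabla v_h\cdot\Curl\psi$ by elementwise integration by parts. Recalling $G_h(v_h)=\nabla v_h-{\bo r_h}(\llb v_h\rrb)$ from \eqref{eq:G_h_def}, the two quantities differ only by $\int_\Om{\bo r_h}(\llb v_h\rrb)\cdot\Curl\psi$; by \eqref{eq:rhj_estimate_2} one has $\norm{{\bo r_h}(\llb v_h\rrb)}_{L^2(\Om)}\lesssim\max_{K\in\calT_h}(h_K/p_K^2)\,\absj{v_h}$, and since $\norm{\Curl\psi}_{L^2(\Om)}\leq\norm{\psi}_{H^1(\Om)}$ and $p_K\geq 1$, this difference is already controlled by the right-hand side. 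Hence it suffices to bound $\Labs{\int_\Om\nabla v_h\cdot\Curl\psi}$.

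For that, I would use that $\Curl\psi$ is weakly divergence-free together with the fact that $v_h|_K$ is a polynomial, so that $\Curl\nabla v_h=0$ on each $K$ and the vector identity $\Div(\psi\times\nabla v_h)=\nabla v_h\cdot\Curl\psi$ holds on each $K\in\calT_h$ with $\psi\times\nabla v_h\in H^1(K)^\dim$. Applying the divergence theorem elementwise, summing, and collecting face contributions — noting that $\psi$ is single-valued and that the normal part $\llb\nabla v_h\cdot n_F\rrb\,n_F$ of the gradient jump is annihilated upon crossing with $n_F$ — I obtain the identity
\begin{equation*}
\int_\Om\nabla v_h\cdot\Curl\psi=\sum_{F\in\calFhib}\int_F\bigl(\llb\nablaT v_h\rrb\times n_F\bigr)\cdot\psi\,\d s ,
\end{equation*}
in which only the tangential-derivative jumps occur, and $\llb\nablaT v_h\rrb=\nablaT\llb v_h\rrb$ is a surface gradient of the polynomial $\llb v_h\rrb$. (In $\dim=2$ the same computation with the rotated broken gradient in place of $\psi\times\nabla v_h$ gives $\sum_F\int_F\psi\,\llb\nablaT v_h\rrb\,\d s$.)

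It then remains to estimate this sum. Since $\Curl$ annihilates constants I may replace $\psi$ by $\psi$ minus its mean over $\Om$, so the target norm becomes $\abs{\psi}_{H^1(\Om)}$. On each face $F$ I would further split off the mean value $\bar\psi_{\omega_F}$ of $\psi$ over the patch $\omega_F$ of the one or two elements meeting $F$: for the remainder $\psi-\bar\psi_{\omega_F}$ one has the Poincar\'e--trace bound $\norm{\psi-\bar\psi_{\omega_F}}_{L^2(F)}\lesssim\tilde h_F^{1/2}\abs{\psi}_{H^1(\omega_F)}$ (and the scale-invariant trace bound $\norm{\psi-\bar\psi_{\omega_F}}_{H^{1/2}(F)}\lesssim\abs{\psi}_{H^1(\omega_F)}$), while for the constant part $\bar\psi_{\omega_F}$ the surface-divergence-free structure of $\nablaT\llb v_h\rrb\times n_F$ turns $\int_F(\nablaT\llb v_h\rrb\times n_F)\cdot\bar\psi_{\omega_F}\,\d s$ into an integral over $\p F$, which is harmless. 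Pairing $\nablaT\llb v_h\rrb\times n_F$ against $\psi-\bar\psi_{\omega_F}$ on each face, using the jump penalties — the weak one $\mu_F\simeq\tilde p_F^2/\tilde h_F$ directly on $\llb\nablaT v_h\rrb$, and the strong one $\eta_F\simeq\tilde p_F^6/\tilde h_F^3$ on $\llb v_h\rrb$ through the $hp$ inverse inequality $\norm{\nablaT\llb v_h\rrb}_{L^2(F)}\lesssim(\tilde p_F^2/\tilde h_F)\norm{\llb v_h\rrb}_{L^2(F)}$ — and then Cauchy--Schwarz over $F\in\calFhib$ together with the mesh conditions \eqref{eq:card_F_bound}, \eqref{eq:c_h_bound}, \eqref{eq:c_p_bound} to collapse the patch contributions, gives a bound of the claimed form $\max_{K\in\calT_h}(h_K/p_K^{3/2})\,\absj{v_h}\,\abs{\psi}_{H^1(\Om)}$.

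The main obstacle is this last estimate: obtaining the \emph{sharp} power $p^{3/2}$ rather than the weaker $p$ that a crude $L^2(F)$--$L^2(F)$ pairing of $\nablaT\llb v_h\rrb$ with $\psi-\bar\psi_{\omega_F}$ would yield. The extra half power of $p$ comes from balancing the two jump penalties against each other and from replacing the full-order trace/inverse inequalities by half-order (fractional Sobolev $H^{\pm1/2}(F)$) ones for the polynomial $\llb v_h\rrb$ — which is precisely where the merely $H^1$-regularity of $\psi$, and no more, is matched. Everything else is a routine application of trace, inverse and Poincar\'e inequalities once the face identity above is in hand.
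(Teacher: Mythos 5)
Your reduction from $G_h(v_h)$ to $\nabla v_h$ via \eqref{eq:rhj_estimate_2}, and the elementwise integration-by-parts identity $\int_\Om \nabla v_h\cdot\Curl\psi = \sum_{F\in\calFhib}\pair{\llb\nabla v_h\times n_F\rrb}{\psi}_F$, are both correct and indeed appear inside the paper's proof. The gap is in the final estimate of the face sum, which is exactly where the whole difficulty of the lemma resides, and which you yourself flag as ``the main obstacle.'' With the mean-value subtraction and the scaled Poincar\'e--trace inequality $\norm{\psi-\bar\psi_{\omega_F}}_{L^2(F)}\lesssim\tilde h_F^{1/2}\abs{\psi}_{H^1(\omega_F)}$, a Cauchy--Schwarz over faces against the $\mu_F$-weighted tangential-jump penalty yields only $\max_K(h_K/p_K)\absj{v_h}\abs{\psi}_{H^1(\Om)}$; using the $\eta_F$-penalty together with the inverse estimate $\norm{\nablaT\llb v_h\rrb}_{L^2(F)}\lesssim(\tilde p_F^2/\tilde h_F)\norm{\llb v_h\rrb}_{L^2(F)}$ gives the \emph{same} power $h/p$, so ``balancing the two jump penalties against each other'' does not in fact buy any extra factor of $p^{1/2}$. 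The fractional $H^{\pm1/2}(F)$ idea also runs into a genuine obstruction: the polynomial jump $\llb v_h\rrb$ and its tangential derivative do not vanish on $\p F$, so any integration by parts on $F$ used to transfer half a derivative to $\psi$ produces $\p F$-boundary terms; the dual norm $\norm{\nablaT\llb v_h\rrb}_{(H^{1/2}(F))'}$ therefore cannot be bounded by $\norm{\llb v_h\rrb}_{H^{1/2}(F)}$ without further argument, and the relevant polynomial inverse estimates in the Lions--Magenes space $\tilde H^{1/2}(F)$ are not available for traces that are nonzero on $\p F$. Similarly, the assertion that $\int_F(\nablaT\llb v_h\rrb\times n_F)\cdot\bar\psi_{\omega_F}\,\d s$ reduces to a ``harmless'' $\p F$-integral is unjustified: those line integrals of $\llb v_h\rrb$ along the skeleton of $\calF_h$ are not controlled by $\absj{v_h}$ without paying additional powers of $p$, and since the constants $\bar\psi_{\omega_F}$ vary from face to face there is no telescoping.

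The paper reaches the rate $h/p^{3/2}$ by an entirely different mechanism, which you do not mention: first prove the \emph{stronger} estimate $\Labs{\int_\Om G_h(v_h)\cdot\Curl\psi}\lesssim\max_K(h_K^2/p_K^3)\absj{v_h}\norm{\psi}_{H^2(\Om)}$ for $\psi\in H^2$ by subtracting $\Pi_h\psi$ inside the lifting identity, and then, for $\psi\in H^1$ only, regularize by a smooth $\psi_\eps$ satisfying $\norm{\psi-\psi_\eps}_{L^2}+\eps\norm{\psi-\psi_\eps}_{H^1}\lesssim\eps\abs{\psi}_{H^1}$ and $\norm{\psi_\eps}_{H^2}\lesssim\eps^{-1}\norm{\psi}_{H^1}$. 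Splitting $\psi=\psi_\eps+(\psi-\psi_\eps)$, the smooth part costs $\eps^{-1}\max_K h_K^2/p_K^3$, the remainder costs $\max_K h_K/p_K^2+\eps$ directly through the face integral (using the continuous trace inequality on $\psi-\psi_\eps$, \emph{not} a Poincar\'e-type mean-subtraction), and optimizing $\eps=\max_K h_K/p_K^{3/2}$ gives precisely the claimed power. The exponent $3/2$ is thus the geometric mean coming from this $\eps$-balance, not from any fractional-order face inequality; your proposal is missing this idea and does not, as written, close the gap between $h/p$ and $h/p^{3/2}$.
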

\begin{proof}
It follows from \eqref{eq:rhj_estimate_2} that $\norm{\nabla_h v_h-G_h(v_h)}_{L^2(\Om)} \lesssim \max_K h_K/p_K^2 \absj{v_h}$, so it is enough to show that \eqref{eq:discrete_grad_curl_orthogonal_h1} is satisfied by $G_h(v_h)$.
Consider momentarily $\psi \in H^2(\Om)^{2\dim-3}$; then, integration by parts yields 
\[
 \int_\Om G_h(v_h) \cdot \Curl \psi \, \d x = \sum_{F\in\calFhib} \pair{\llb v_h \rrb}{\lla \Curl \psi \cdot n_F \rra}_F - \sum_{K\in\calT_h}\pair{{\bo r_h}(\llb v_h \rrb)}{\Curl \psi}_K.
\]
Therefore, the definitions of the lifting operators $\bo r_h$ and $r_h$ imply that
\begin{multline*}
 \int_\Om G_h(v_h) \cdot \Curl \psi \, \d x = \sum_{F\in\calFhib} \pair{\llb v_h \rrb}{\lla \Curl (\psi-\Pi_h\psi) \cdot n_F \rra}_F\\ - \sum_{K\in\calT_h}\pair{{\bo r_h}(\llb v_h \rrb)}{\Curl (\psi-\Pi_h\psi)}_K.
\end{multline*}
Thus, if $\psi \in H^2(\Om)^{2\dim-3}$, it is seen from the approximation bounds of \eqref{eq:approximation_bound} and from the lifting bound \eqref{eq:rhj_estimate_2} that
\begin{equation}\label{eq:discrete_grad_curl_orthogonal_h2}
 \Labs{\int_\Om G_h(v_h) \cdot \Curl \psi \, \d x} \lesssim \max_{K\in\calT_h} \frac{h_K^2}{p_K^3} \absj{v_h} \norm{\psi}_{H^2(\Om)}.
\end{equation}
Now, let $\psi \in H^1(\Om)^{2\dim-3}$. We apply \cite[Thm.~5.33]{Adams2003} to the components of $\psi$: for each $\eps>0$, there exists a $\psi_\eps \in C^{\infty}(\R^{\dim})^{2\dim-3}$ such that
\begin{subequations}\label{eq:psi_eps_approx}
\begin{gather}
 \norm{\psi-\psi_\eps}_{L^2(\Om)} + \eps \norm{\psi - \psi_\eps}_{H^1(\Om)} \lesssim \eps \abs{\psi}_{H^1(\Om)},\label{eq:psi_eps_direct}
 \\ \norm{\psi_\eps}_{H^2(\Om)} \lesssim \eps^{-1} \norm{\psi}_{H^1(\Om)},\label{eq:psi_eps_inverse}
\end{gather}
\end{subequations}
where, importantly, the constants in \eqref{eq:psi_eps_approx} do not depend on $\eps$.
Define $\phi_\eps\coloneqq\psi-\psi_\eps$, so that
\[
\int_\Om G_h(v_h) \cdot \Curl \psi \, \d x  = \int_\Om G_h(v_h) \cdot \Curl \psi_\eps \, \d x + \int_\Om G_h(v_h) \cdot \Curl \phi_\eps \, \d x.
\]
The bounds \eqref{eq:discrete_grad_curl_orthogonal_h2} and \eqref{eq:psi_eps_inverse} show that 
\begin{equation}\label{eq:grad_curl_1}
 \Labs{\int_\Om G_h(v_h) \cdot \Curl \psi_\eps \, \d x} \lesssim \eps^{-1} \max_{K\in\calT_h} \frac{h_K^2}{p_K^3} \absj{v_h} \norm{\psi}_{H^1(\Om)}.
\end{equation}
Integration by parts yields
\[
\int_\Om G_h(v_h) \cdot \Curl \phi_\eps \, \d x = \sum_{F\in\calFhib} \pair{\llb \nabla v_h \times n_F \rrb}{\phi_\eps}_F-\sum_{K\in\calT_h} \pair{{\bo r_h}(\llb v_h \rrb)}{\Curl \phi_\eps}_K.
\]
Lemma~\ref{lem:rhj_estimates} and \eqref{eq:psi_eps_direct} imply that
\begin{equation}\label{eq:grad_curl_3}
 \sum_{K\in\calT_h} \abs{\pair{{\bo r_h}(\llb v_h \rrb)}{\Curl \phi_\eps}_K} \lesssim \max_{K\in\calT_h}\frac{h_K}{p_K^2} \absj{v_h} \norm{\psi}_{H^1(\Om)}.
\end{equation}
Recall the continuous trace inequality \cite{Monk1999}: for an element $K$ and a face $F\subset \p K$,
\[
   \norm{\phi_\eps}_{L^2(F)}^2 \lesssim \abs{\phi_\eps}_{H^1(K)}\norm{\phi_\eps}_{L^2(K)} + \frac{1}{h_K}\norm{\phi_\eps}_{L^2(K)}^2
 \lesssim \frac{h_K}{p_K^2} \abs{\phi_\eps}_{H^1(K)}^2+\frac{p_K^2}{h_K}\norm{\phi_\eps}_{L^2(K)}^2.
\]
Therefore, the fact that $\mu_F = \cmu \,\tilde{p}_F^2/\tilde{h}_F$ leads to
\begin{multline*}
  \sum_{F\in\calFhib}\abs{ \pair{\llb \nabla v_h \times n_F \rrb}{\phi_\eps}_F } \lesssim  \left( \sum_{K\in\calT_h} \left[\frac{h_K^2}{p_K^4} \abs{\phi_\eps}_{H^1(K)}^2 + \norm{\phi_\eps}_{L^2(K)}^2 \right] \right)^{\frac{1}{2}} \absj{v_h}
\\ \lesssim \left( \max_{K\in\calT_h} \frac{h_K}{p_K^2} \abs{\phi_\eps}_{H^1(\Om)} + \norm{\phi_\eps}_{L^2(\Om)} \right) \absj{v_h},
\end{multline*}
where we have used the identity $\abs{\llb \nabla v_h \times n_F \rrb}=\abs{\llb \nablaT v_h \rrb}$ for each face $F$, because $\nablaT v_h$ is the component of $\nabla v_h$ that is orthogonal to $n_F$.
Therefore, we deduce from \eqref{eq:psi_eps_direct} and \eqref{eq:grad_curl_3} that
\begin{equation}\label{eq:grad_curl_2}
 \Labs{\int_\Om G_h(v_h) \cdot \Curl \phi_\eps \, \d x} \lesssim \left( \max_{K\in\calT_h} \frac{h_K}{p_K^2} + \eps  \right) \absj{v_h}\norm{\psi}_{H^1(\Om)}.
\end{equation}
Combining \eqref{eq:grad_curl_1} and \eqref{eq:grad_curl_2} yields
\[
 \Labs{ \int_\Om G_h(v_h) \cdot \Curl \psi \, \d x } \lesssim \left( \eps^{-1} \max_{K\in\calT_h} \frac{h^2_K}{p^3_K} + \max_{K\in\calT_h}\frac{h_K}{p_K^2} + \eps \right) \absj{v_h} \norm{\psi}_{H^1(\Om)}.
\]
The bound \eqref{eq:discrete_grad_curl_orthogonal_h1} is then obtained by taking $\eps\coloneqq \max_{K\in\calT_h} h_K/p_K^{3/2}$.
\end{proof}

\begin{theorem}\label{thm:dg_h2_approximation}
Let $\Om\subset \R^{\dim}$, $\dim\in\{2,3\}$, be a bounded convex polytopal domain, and let $\left\{\calT_h\right\}_h$ be a shape-regular sequence of meshes satisfying \eqref{eq:card_F_bound}, \eqref{eq:c_h_bound} and \eqref{eq:c_p_bound}. For a given $v_h \in \Vh$, let $v_{(h)}\in H^2(\Om)\cap H^1_0(\Om)$ be the unique solution of the boundary-value problem
\begin{subequations}\label{eq:dg_h2_approximation_def}
 \begin{alignat}{2}
\Delta v_{(h)}& = D_h(G_h(v_h)) & &\quad\text{in }\Om,
\\ v_{(h)} &= 0 & &\quad\text{on }\DO.
\end{alignat}
\end{subequations}
Then, the approximation $v_{(h)}$ to $v_h$ satisfies
\begin{subequations}
\begin{gather}
 \norm{v_h - v_{(h)}}_{L^2(\Om)} + \max_{K\in\calT_h} \frac{h_K}{p_K}\norm{v_h - v_{(h)}}_{H^1(\Om;\calT_h)} \lesssim \max_{K\in\calT_h} \frac{h_K^2}{p_K^2}\absj{v_h},\label{eq:dg_h2_approximation_1}
\\ \norm{v_{(h)}}_{H^2(\Om)} \lesssim \normh{v_h}.\label{eq:dg_h2_approximation_bound_2}
\end{gather}
\end{subequations}
\end{theorem}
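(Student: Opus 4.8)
The plan is to prove \eqref{eq:dg_h2_approximation_bound_2} directly from elliptic regularity, and to reduce both estimates in \eqref{eq:dg_h2_approximation_1} to a single consistency identity which is then exploited by duality. Since $\Om$ is a bounded convex polytope, the boundary-value problem \eqref{eq:dg_h2_approximation_def} enjoys full $H^2$-regularity, whence $\norm{v_{(h)}}_{H^2(\Om)}\lesssim\norm{\Delta v_{(h)}}_{L^2(\Om)}=\norm{D_h(G_h(v_h))}_{L^2(\Om)}$, and the right-hand side is $\lesssim\normh{v_h}$ by Corollary~\ref{cor:G_h_stability}; this proves \eqref{eq:dg_h2_approximation_bound_2}.

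The core of the argument is the identity: for every $\phi\in H^2(\Om)\cap H^1_0(\Om)$,
\begin{equation}\label{eq:sketch_identity}
\Labs{\int_\Om (v_h-v_{(h)})\,\Delta\phi\,\d x}\lesssim\max_{K\in\calT_h}\frac{h_K^2}{p_K^2}\,\absj{v_h}\,\norm{\phi}_{H^2(\Om)}.
\end{equation}
To obtain it, I would compute $\int_\Om v_{(h)}\,\Delta\phi=\int_\Om(\Delta v_{(h)})\,\phi=\int_\Om D_h(G_h(v_h))\,\phi$, expanding $D_h(G_h(v_h))$ as in \eqref{eq:G_h_stability_1}, and separately integrate $\int_\Om v_h\,\Delta\phi$ by parts twice on each element, using that $\phi$ and $\nabla\phi$ have single-valued traces and that $\phi$ vanishes on $\DO$. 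On subtracting, the broken-Laplacian contributions and the ``consistency'' face terms $\sum_F\pair{\llb\nabla v_h\cdot n_F\rrb}{\phi}_F$ and $\sum_F\pair{\llb v_h\rrb}{\nabla\phi\cdot n_F}_F$ cancel exactly --- invoking the definitions of $\bo r_h$, $r_h$ and the product identity $\llb ab\rrb=\llb a\rrb\lla b\rra+\lla a\rra\llb b\rrb$ --- so that only a finite sum of terms survives, each pairing a jump of $v_h$, or a lifting $\bo r_h(\llb v_h\rrb)$ or $r_h(\cdot)$ of such a jump, against an $hp$-approximation residual $\phi-\Pi_h\phi$ or $\nabla\phi-\Pi_h\nabla\phi$. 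Each surviving term is then bounded as in \eqref{eq:sketch_identity}: the jump factors are absorbed into $\absj{v_h}$ via the choices \eqref{eq:eta_mu} of $\mu_F$, $\eta_F$; the lifting factors via the bounds \eqref{eq:rhj_estimate_2}, \eqref{eq:rhj_estimate_3}, \eqref{eq:normal_lift_stability} of Lemmas~\ref{lem:lifting_stability}--\ref{lem:rhj_estimates}; and the residuals via the approximation estimates \eqref{eq:approximation_bound} together with trace inequalities, with the weights $h_K/p_K$ retained inside the element and face sums at each Cauchy--Schwarz step so that the sharp factor $\max_K h_K^2/p_K^2$ is produced rather than a product of maxima.

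The $L^2$-bound in \eqref{eq:dg_h2_approximation_1} then follows by taking $\phi\in H^2(\Om)\cap H^1_0(\Om)$ to solve $-\Delta\phi=v_h-v_{(h)}$, for which elliptic regularity gives $\norm{\phi}_{H^2(\Om)}\lesssim\norm{v_h-v_{(h)}}_{L^2(\Om)}$, and inserting this into \eqref{eq:sketch_identity}. For the broken $H^1$-bound, note that $\max_K h_K^2/p_K^2=(\max_K h_K/p_K)^2$, so in view of the $L^2$-bound and of $h_K\le\diam\Om$ it is enough to show $\norm{\nabla v_h-\nabla v_{(h)}}_{L^2(\Om)}\lesssim\max_K(h_K/p_K)\,\absj{v_h}$, where $\nabla v_h$ denotes the broken gradient. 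Since $\nabla v_h-G_h(v_h)=\bo r_h(\llb v_h\rrb)$, whose $L^2(\Om)$-norm is $\lesssim\max_K(h_K/p_K^2)\,\absj{v_h}$ by \eqref{eq:rhj_estimate_2}, it suffices to estimate $\bo g\coloneqq G_h(v_h)-\nabla v_{(h)}$. I would use the $L^2(\Om)^\dim$-orthogonal Helmholtz decomposition $\bo g=\nabla\zeta_0+\bo w_0$ with $\zeta_0\in H^1_0(\Om)$ and $\Div\bo w_0=0$, so that $\norm{\bo g}_{L^2(\Om)}^2=\int_\Om\bo g\cdot\nabla\zeta_0+\int_\Om\bo g\cdot\bo w_0$. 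The gradient part is handled by a consistency identity analogous to the one above, now requiring only a single integration by parts and tested against $\zeta_0$: after using $\nabla v_h=G_h(v_h)+\bo r_h(\llb v_h\rrb)$ and $\int_\Om\nabla v_{(h)}\cdot\nabla\zeta_0=-\int_\Om D_h(G_h(v_h))\,\zeta_0$, and integrating the lifting contributions by parts, all non-residual terms cancel and one is left with a sum bounded by $\max_K(h_K/p_K)\,\absj{v_h}\,\norm{\nabla\zeta_0}_{L^2(\Om)}$. For the divergence-free part, write $\bo w_0=\Curl\psi_0$ with $\psi_0\in H^1(\Om)^{2\dim-3}$ and $\norm{\psi_0}_{H^1(\Om)}\lesssim\norm{\bo w_0}_{L^2(\Om)}$, a standard vector-potential estimate on the convex domain $\Om$; since $v_{(h)}\in H^1_0(\Om)$ and $\Div\bo w_0=0$ we have $\int_\Om\nabla v_{(h)}\cdot\bo w_0=0$, so $\int_\Om\bo g\cdot\bo w_0=\int_\Om G_h(v_h)\cdot\Curl\psi_0$, which Lemma~\ref{lem:discrete_grad_curl_orthogonal} bounds by $\max_K(h_K/p_K^{3/2})\,\absj{v_h}\,\norm{\psi_0}_{H^1(\Om)}$. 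Orthogonality then yields $\norm{\bo g}_{L^2(\Om)}\lesssim\max_K(h_K/p_K)\,\absj{v_h}$, which finishes \eqref{eq:dg_h2_approximation_1}.

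The main obstacle I anticipate is the consistency identity \eqref{eq:sketch_identity}: arranging the exact cancellation of the leading terms and then verifying that \emph{every} remaining term carries the sharp weight $\max_K h_K^2/p_K^2$ --- this is precisely where the scalings \eqref{eq:eta_mu} of the penalty parameters, the lifting estimates of Lemma~\ref{lem:rhj_estimates}, and the careful bookkeeping of the $h_K/p_K$ weights inside the sums are all indispensable. A minor additional technicality is the vector-potential estimate underlying the Helmholtz decomposition in the case $\dim=3$.
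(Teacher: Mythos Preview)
Your proposal is correct and follows essentially the same route as the paper: elliptic regularity together with Corollary~\ref{cor:G_h_stability} for \eqref{eq:dg_h2_approximation_bound_2}, a Helmholtz decomposition of $G_h(v_h)-\nabla v_{(h)}$ combined with Lemma~\ref{lem:discrete_grad_curl_orthogonal} for the $H^1$-estimate, and a duality argument with $-\Delta z=v_h-v_{(h)}$ for the $L^2$-estimate. The only organizational difference is that the paper packages the consistency computation as a single intermediate bound $\bigl|\int_\Om(\nabla v_{(h)}-G_h(v_h))\cdot\nabla p\,\d x\bigr|\lesssim\max_K(h_K^k/p_K^k)\,\absj{v_h}\,\norm{p}_{H^k(\Om)}$ for $p\in H^k(\Om)\cap H^1_0(\Om)$, $k\in\{1,2\}$, and then invokes it with $k=1$ for the gradient part of the Helmholtz decomposition and with $k=2$ inside the duality step (plus a separately handled lifting remainder $\sum_K\pair{\bo r_h(\llb v_h\rrb)}{\nabla z}_K-\sum_F\pair{\llb v_h\rrb}{\lla\nabla z\cdot n_F\rra}_F$), which is equivalent to your identity \eqref{eq:sketch_identity} after one integration by parts.
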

\vspace{-10pt}
\begin{remark}
 The above result is nearly optimal in the sense that only the jump seminorm $\absj{v_h}$ appears on the right-hand side of the error bound \eqref{eq:dg_h2_approximation_1}, and that the correct orders of convergence are established.
\end{remark}

\begin{proof}
Note that convexity of $\Om$ implies that $v_{(h)}$ is well-defined, see \cite{Grisvard2011}, and that \eqref{eq:dg_h2_approximation_bound_2} holds as a result of Corollary~\ref{cor:G_h_stability}.  
First, we show that for any $p \in H^k(\Om)\cap H^1_0(\Om)$, $k\in\{1,2\}$, we have
\begin{equation}\label{eq:dg_h2_approximation_2}
 \Labs{ \int_\Om \left( \nabla v_{(h)}-G_h(v_h) \right)\cdot \nabla p \, \d x } \lesssim \max_{K\in\calT_h} \frac{h^k_K}{p_K^k} \absj{v_h} \norm{p}_{H^k(\Om)}.
\end{equation}
Indeed, since $v_{(h)}$ solves \eqref{eq:dg_h2_approximation_def}, integration by parts yields
\begin{multline*}
 \int_\Om \left(\nabla v_{(h)}-G_h(v_h)\right)\cdot \nabla p \, \d x  = \sum_{K\in\calT_h} \pair{ r_h(\llb G_h(v_h)\cdot n_F \rrb)}{p}_K \\ -\sum_{F\in\calF_h^i} \pair{ \llb G_h(v_h)\cdot n_F \rrb}{\lla p \rra }_F.
\end{multline*}
Then, the definition of the lifting operator gives
\begin{multline}\label{eq:dg_h2_approximation_4}
\int_\Om \left(\nabla v_{(h)}-G_h(v_h)\right)\cdot \nabla p \, \d x   = \sum_{K\in\calT_h} \pair{ r_h(\llb G_h(v_h)\cdot n_F \rrb)}{p-\Pi_h p}_K \\ -\sum_{F\in\calF_h^i}\pair{ \llb G_h(v_h)\cdot n_F \rrb}{\lla p - \Pi_h p\rra }_F.
\end{multline}
Recalling that $p_K\geq1$ for each element $K$, it is then seen that \eqref{eq:dg_h2_approximation_2} follows from Corollary~\ref{cor:G_h_stability} and from the approximation bounds \eqref{eq:approximation_bound}.

The remainder of the proof makes use of Helmholtz decompositions of vector fields \cite{Girault1986}: for any $\bo v \in L^2(\Om)^{\dim}$, there exists $p\in H^1_0(\Om)$ and $\psi \in H^1(\Om)^{2\dim-3}$, such that $\bo v = \nabla p + \Curl \psi$ in $\Om$. Indeed, $p\in H^1_0(\Om)$ is defined by 
\[
 \int_\Om \nabla p \cdot \nabla q \, \d x = \int_\Om \bo v \cdot \nabla q \, \d x \qquad \forall\,q\in H^1_0(\Om).
\]
Then, $\bo v - \nabla p$ is divergence free, thus $\pair{\left(\bo v - \nabla p\right)\cdot \bo n}{1}_{\DO}=0$, where $\bo n$ is the unit outward normal on $\DO$. Since the convex domain $\Om$ has a connected boundary, it follows from \cite[Thms.~3.1 \&~3.4 pp.~37--45]{Girault1986} that there exists a $\psi \in H^1(\Om)^{2\dim-3}$ such that $\bo v = \nabla p + \Curl \psi$.
Moreover, $\psi$ may be chosen so that $\norm{p}_{H^1(\Om)}+\norm{\psi}_{H^1(\Om)} \lesssim \norm{ \bo v}_{L^2(\Om)}$ for some constant independent of $\bo v$. This is a consequence of the Open Mapping Theorem and the facts that $ \mathcal{V}\coloneqq\{ \bo v \in L^2(\Om)^{\dim}\colon \Div \bo v =0 \}$ is a closed subspace of $L^2(\Om)^{\dim}$, and that the mapping $ \psi \mapsto \Curl \psi$ is a surjective bounded linear mapping from $H^1(\Om)^{2\dim-3}$ to $\mathcal{V}$.

Now, observe that $\norm{\nabla v_h - G_h(v_h)}_{L^2(\Om)} \lesssim \max_{K\in\calT_h} h_K/p_K^2 \, \absj{v_h}$ by \eqref{eq:rhj_estimate_2}, so it is enough to consider the error between $G_h(v_h)$ and $\nabla v_{(h)}$ to bound $\abs{v_h - v_{(h)}}_{H^1(\Om;\calT_h)}$.
Let $p \in H^1_0(\Om)$ and $\psi \in H^1(\Om)^{2\dim-3}$ satisfy $ \nabla v_{(h)} - G_h(v_h) = \nabla p + \Curl \psi$, with $\norm{p}_{H^1(\Om)}+\norm{\psi}_{H^1(\Om)} \lesssim \norm{ \nabla v_{(h)}-G_h(v_h)}_{L^2(\Om)}$. Then, noting that $\nabla v_{(h)}$ and $\Curl \psi$ are orthogonal, it is deduced that
\begin{equation}\label{eq:dg_h2_approximation_3}
 \norm{  \nabla v_{(h)}-G_h(v_h) }_{L^2(\Om)}^2 = \int_\Om \left(\nabla v_{(h)}-G_h(v_h)\right)\cdot \nabla p \, \d x - \int_{\Om} G_h(v_h) \cdot \Curl \psi \, \d x.
\end{equation}
Inequality \eqref{eq:dg_h2_approximation_2} and the bound $\norm{p}_{H^1(\Om)} \lesssim \norm{\nabla v_{(h)}-G_h(v_h)}_{L^2(\Om)}$ give
\[
 \Labs{\int_\Om \left(\nabla v_{(h)}-G_h(v_h)\right)\cdot \nabla p \, \d x } \lesssim \max_{K\in\calT_h}\frac{h_K}{p_K} \absj{v_h} \norm{\nabla v_{(h)}-G_h(v_h)}_{L^2(\Om)}.
\]
The bounds of Lemma~\ref{lem:discrete_grad_curl_orthogonal} show that
\[
 \Labs{\int_{\Om} G_h(v_h) \cdot \Curl \psi \, \d x} \lesssim \max_{K\in\calT_h} \frac{h_K}{p_K^{3/2}} \absj{v_h} \norm{\nabla v_{(h)}-G_h(v_h)}_{L^2(\Om)}.
\]
Therefore, equation \eqref{eq:dg_h2_approximation_3} and the above bounds yield
\begin{equation}\label{eq:dg_h2_approximation_6}
\norm{\nabla v_{(h)}-G_h(v_h)}_{L^2(\Om)} \lesssim \max_{K\in\calT_h} \frac{h_K}{p_K} \absj{v_h}.
\end{equation}
We now consider the error $\norm{v_h - v_{(h)}}_{L^2(\Om)}$. Since $\Om$ is convex, there is a unique $z\in H^2(\Om)\cap H^1_0(\Om)$ that solves $-\Delta z = v_h - v_{(h)}$ in $\Om$, with $\norm{z}_{H^2(\Om)}\lesssim \norm{v_h-v_{(h)}}_{L^2(\Om)}$. Then, it is found that
\begin{multline*}
 \norm{v_h - v_{(h)}}_{L^2(\Om)}^2 = \int_\Om \left(G_h(v_h)-\nabla v_{(h)}\right)\cdot \nabla z\, \d x
\\ + \sum_{K\in\calT_h} \pair{{\bo r_h}(\llb v_h \rrb)}{\nabla z}_K - \sum_{F\in\calFhib} \pair{\llb v_h \rrb}{\lla \nabla z \cdot n_F \rra}_F.
\end{multline*}
Applying the bound \eqref{eq:dg_h2_approximation_2} to $z \in H^2(\Om)\cap H^1_0(\Om)$ gives 
\[
 \Labs{\int_\Om \left(G_h(v_h)-\nabla v_{(h)}\right)\cdot \nabla z \,\d x} \lesssim \max_{K\in\calT_h} \frac{h^2_K}{p_K^2} \absj{v_h} \norm{v_h-v_{(h)}}_{L^2(\Om)}.
\]
Also, it is found that 
\begin{multline*}
 \sum_{K\in\calT_h} \pair{{\bo r_h}(\llb v_h \rrb)}{\nabla z}_K - \sum_{F\in\calFhib} \pair{\llb v_h \rrb}{\lla \nabla z \cdot n_F \rra}_F
\\ = \sum_{K\in\calT_h} \pair{{\bo r_h}(\llb v_h \rrb)}{\nabla (z-\Pi_h z)}_K - \sum_{F\in\calFhib} \pair{\llb v_h \rrb}{\lla \nabla (z-\Pi_h z) \cdot n_F \rra}_F,
\end{multline*}
which is bounded by $\max_K h_K^2/p_K^{3} \absj{v_h} \norm{v_h-v_{(h)}}_{L^2(\Om)}$. Thus, we have shown that
\begin{equation}\label{eq:dg_h2_approximation_5}
 \norm{v_h - v_{(h)}}_{L^2(\Om)} \lesssim \max_{K\in\calT_h} \frac{h_K^2}{p_K^2} \absj{v_h}.
\end{equation}
The bounds \eqref{eq:dg_h2_approximation_6} and \eqref{eq:dg_h2_approximation_5} imply \eqref{eq:dg_h2_approximation_1}.
\end{proof}

\subsection{Approximation by coarse grid functions}
Theorem~\ref{thm:dg_h2_approximation} leads to the following approximation result between coarse and fine spaces.
\begin{theorem}\label{thm:dg_coarse_approximation}
 Let $\Om\subset \R^{\dim}$, $\dim\in\{2,3\}$, be a bounded convex polytopal domain, and let $\{\calT_H\}_H$ and $\{\calT_h\}_h$ be nested shape-regular sequences of meshes satisfying \eqref{eq:card_F_bound}, \eqref{eq:c_h_bound} and \eqref{eq:c_p_bound}. Then, for any $v_h \in \Vh$, there exists a $v_H\in\VH$, such that
\begin{subequations}
\begin{gather}
\norm{v_h-v_H}_{H^k(\Om;\calT_h)} \lesssim \left(\max_{D\in\calT_H}\frac{H_D}{q_D}\right)^{2-k} \,\normh{v_h}, \qquad k\in\{0,1,2\}. \label{eq:dg_coarse_approximation_1}
\\ \normh{v_H}^2 \lesssim \left(1 + \max_{D\in\calT_H} \left[\frac{H_D}{q_D} \max_{K\in\calT_h(D)}\frac{p_K^2}{h_K} + \frac{H^3_D}{q_D^3} \max_{K\in\calT_h(D)}\frac{p_K^6}{h_K^3} \right]\right)  \normh{v_h}^2.\label{eq:dg_coarse_approximation_2}
\end{gather}
\end{subequations}
\end{theorem}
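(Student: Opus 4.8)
The plan is to combine the $H^2$-regular approximation of Theorem~\ref{thm:dg_h2_approximation} with the coarse-mesh $hp$-approximation operator. Given $v_h\in\Vh$, let $v_{(h)}\in H^2(\Om)\cap H^1_0(\Om)$ be the function produced by Theorem~\ref{thm:dg_h2_approximation}, and set $v_H\coloneqq\Pi_H v_{(h)}\in\VH$, where $\Pi_H$ is the coarse analogue of the operator in \eqref{eq:approximation_bound} applied with $\bo s\equiv 2$; this is legitimate since $v_{(h)}\in H^2(\Om)\subset H^2(\Om;\calT_H)$ and $q_D\geq 1$ for every $D\in\calT_H$, so that $\min(2,q_D+1)=2$. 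Throughout, $\calT_h(D)$ denotes the set of fine elements contained in a coarse element $D$, and we repeatedly use that $\sum_{D\in\calT_H}\norm{w}_{H^r(D)}^2=\norm{w}_{H^r(\Om;\calT_h)}^2$ whenever $w$ is $H^r$-regular on each coarse element, which applies to $v_{(h)}$, to $v_H$, and to their difference.

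For \eqref{eq:dg_coarse_approximation_1}, I would apply \eqref{eq:approximation_element} with $r=k$ and $s_D=2$ to get $\norm{v_{(h)}-v_H}_{H^k(D)}\lesssim (H_D/q_D)^{2-k}\norm{v_{(h)}}_{H^2(D)}$ for each $D$, sum over coarse elements, pull out the maximum, and invoke \eqref{eq:dg_h2_approximation_bound_2} to obtain $\norm{v_{(h)}-v_H}_{H^k(\Om;\calT_h)}\lesssim(\max_D H_D/q_D)^{2-k}\normh{v_h}$. For $k\in\{0,1\}$ the triangle inequality and the error bound \eqref{eq:dg_h2_approximation_1} then finish the estimate, the fine-mesh remainder $\max_K(h_K/p_K)^{2-k}\absj{v_h}$ being of lower or equal order because $h_K\leq H_D$ and $q_D\lesssim p_K$ whenever $K\subset D$. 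For $k=2$ one instead bounds directly $\norm{v_h-v_H}_{H^2(\Om;\calT_h)}\leq\norm{v_h}_{H^2(\Om;\calT_h)}+\norm{v_{(h)}}_{H^2(\Om)}+\norm{v_{(h)}-v_H}_{H^2(\Om;\calT_h)}\lesssim\normh{v_h}$, using $\norm{v_h}_{H^2(\Om;\calT_h)}\leq\normh{v_h}$ and \eqref{eq:dg_h2_approximation_bound_2}.

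For \eqref{eq:dg_coarse_approximation_2}, split $\normh{v_H}^2=\sum_{K\in\calT_h}\norm{v_H}_{H^2(K)}^2+\absj{v_H}^2$. The first term equals $\sum_{D}\norm{v_H}_{H^2(D)}^2\lesssim\norm{v_{(h)}}_{H^2(\Om)}^2\lesssim\normh{v_h}^2$ by \eqref{eq:approximation_element} with $r=2$ and \eqref{eq:dg_h2_approximation_bound_2}, giving the leading $1$. For the jump seminorm, the key observation is that $v_H$ restricted to any coarse element $D$ is a single polynomial, so the jumps of $v_H$, of $\nabla v_H\cdot n_F$ and of $\nablaT v_H$ all vanish on fine faces interior to $D$; hence only fine faces lying on a coarse interface or on $\DO$ contribute. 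On each such face, since $v_{(h)}\in H^2(\Om)\cap H^1_0(\Om)$ has a single-valued gradient across interfaces and satisfies $v_{(h)}=0$, $\nablaT v_{(h)}=0$ on $\DO$, every jump of $v_H$ equals the corresponding jump of $v_H-v_{(h)}$. The mesh conditions \eqref{eq:c_h_bound}, \eqref{eq:c_p_bound} and the choice \eqref{eq:eta_mu} give $\mu_F\lesssim\max_{K\in\calT_h(D)}p_K^2/h_K$ and $\eta_F\lesssim\max_{K\in\calT_h(D)}p_K^6/h_K^3$ for each fine face $F\subset\p D$, while \eqref{eq:approximation_face}, valid for $\abs{\a}\leq 1$, yields $\norm{v_{(h)}-v_H}_{L^2(\p D)}^2\lesssim (H_D/q_D)^3\norm{v_{(h)}}_{H^2(D)}^2$ and $\norm{\nabla(v_{(h)}-v_H)}_{L^2(\p D)}^2\lesssim (H_D/q_D)\norm{v_{(h)}}_{H^2(D)}^2$. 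Summing the fine faces on each $\p D$ against these coarse-element trace bounds — the fine faces covering a coarse interface being handled via the bounded-valence conditions \eqref{eq:card_F_bound}, \eqref{eq:c_h_bound} — then pulling out the maxima over $K\in\calT_h(D)$ and using $\sum_D\norm{v_{(h)}}_{H^2(D)}^2=\norm{v_{(h)}}_{H^2(\Om)}^2\lesssim\normh{v_h}^2$, one recovers precisely the bracketed factor of \eqref{eq:dg_coarse_approximation_2}.

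The main obstacle is this last jump-seminorm estimate: one has to pass carefully from the jumps of $v_H$ across fine faces to traces of the smooth error $v_{(h)}-v_H$ on coarse-element boundaries, manage the bookkeeping of the (possibly many, possibly hanging) fine faces covering a single coarse interface, and match the powers of $H_D/q_D$ coming from the coarse trace estimates \eqref{eq:approximation_face} against the scalings $\mu_F\simeq\tilde{p}_F^2/\tilde{h}_F$ and $\eta_F\simeq\tilde{p}_F^6/\tilde{h}_F^3$ so as to land exactly on the stated orders. By comparison, \eqref{eq:dg_coarse_approximation_1} is a fairly routine consequence of Theorem~\ref{thm:dg_h2_approximation} together with the summation of the coarse $hp$-estimates over coarse elements.
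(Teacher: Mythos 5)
Your proposal is correct and follows essentially the same route as the paper: take $v_H=\Pi_H v_{(h)}$ with $v_{(h)}$ from Theorem~\ref{thm:dg_h2_approximation}, obtain \eqref{eq:dg_coarse_approximation_1} from the coarse $hp$-approximation estimates and the triangle inequality, and bound $\absj{v_H}$ by observing that the jumps vanish inside coarse elements and equal the jumps of $v_H-v_{(h)}$ on coarse interfaces, then invoking the trace bound \eqref{eq:approximation_face} together with the scalings of $\mu_F,\eta_F$. Every step you list appears, in essentially the same form and order, in the paper's proof.
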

\begin{proof}
Let $v_{(h)}\in H^2(\Om)\cap H^1_0(\Om)$ be the approximation to $v_h$ considered in Theorem~\ref{thm:dg_h2_approximation}. Let $v_H \in \VH$ be the projection $\Pi_H v_{(h)}$. Since $\max_{K\in\calT_h}  h_K /p_K \leq \max_{D\in\calT_H} H_D/q_D$, it is seen that \eqref{eq:dg_coarse_approximation_1} follows easily from the triangle inequality in conjunction with \eqref{eq:dg_h2_approximation_bound_2} and the approximation properties of $v_H$. In particular, it follows from $v_H = \Pi_H v_{(h)}$ that $\norm{v_H}_{H^2(\Om;\calT_h)} \lesssim \norm{v_{(h)}}_{H^2(\Om)}$, and since Theorem~\ref{thm:dg_h2_approximation} implies that $\norm{v_{(h)}}_{H^2(\Om)} \lesssim \normh{v_h}$, we obtain $\norm{v_H}_{H^2(\Om;\calT_h)} \lesssim \normh{v_h}$.

It remains to show \eqref{eq:dg_coarse_approximation_2} by bounding the jump seminorm of $v_H$ as follows. If the face $F\in \calF_h^i(D)$ for $D\in\calT_H$, then the jumps of $v_H$ and its first derivatives vanish because $v_H$ is a polynomial over $D$. Since $v_{(h)} \in H^2(\Om)\cap H^1_0(\Om)$, $\llb v_H \rrb = \llb v_H - v_{(h)} \rrb $ and $\llb \nablaT v_H \rrb = \llb \nablaT (v_H-v_{(h)}) \rrb $ for each face $F\in\calFhib(\p D)$, whilst $\llb \nabla v_H\cdot n_F \rrb = \llb \nabla (v_H-v_{(h)})\cdot n_F \rrb$ for each face $F\in\calF_h^i(\p D)$. Therefore, it is deduced from the mesh assumptions on $\calT_h$ and $\calT_H$ that
\[\begin{split}
 \sum_{F\in\calFhib} \eta_F \norm{\llb v_H \rrb}_{L^2(F)}^2  & \leq \sum_{D\in\calT_H} \sum_{F\in\calFhib(\p D)}\eta_F \norm{\llb v_H - v_{(h)} \rrb}_{L^2(F)}^2
\\ &\lesssim \sum_{D\in\calT_H}  \max_{K\in \calT_h(D)} \frac{p_K^6}{h_K^3} \norm{ v_H-v_{(h)} }_{L^2(\p D)}^2
\\ &\lesssim \max_{D\in\calT_H} \left[\frac{H_D^3}{q_D^3} \max_{K\in \calT_h(D)} \frac{p_K^6}{h_K^3}\right] \norm{v_{(h)}}_{H^2(\Om)}^2.
\end{split}\]
Similar bounds also yield
\begin{multline*}
 \sum_{F\in\calFhib} \mu_F \norm{\llb \nablaT v_H \rrb}_{L^2(F)}^2 + \sum_{F\in\calF_h^i} \mu_F \norm{\llb \nabla v_H \cdot n_F \rrb}_{L^2(F)}^2
\\ \lesssim \max_{D\in\calT_H}\left[\frac{H_D}{q_D}\max_{K\in\calT_h(D)}\frac{p_K^2}{h_K}\right] \norm{v_{(h)}}_{H^2(\Om)}^2.
\end{multline*}
Since $\norm{v_{(h)}}_{H^2(\Om)} \lesssim \normh{v_h}$, the proof of \eqref{eq:dg_coarse_approximation_2} is complete.
\end{proof}

Previous results on the approximation of fine mesh functions by coarse mesh functions typically involved lower-order projection operators, which were therefore suboptimal in terms of $q$ in bounds such as \eqref{eq:dg_coarse_approximation_1}. The original result of an approximation with optimal orders in both $H$ and $q$ of Theorem~\ref{thm:dg_coarse_approximation} enables the sharp analysis of the nonoverlapping domain decomposition preconditioners in the next section.

\section{Stable~decomposition property}\label{sec:stable_decomposition}

The following lemma, due to Feng and Karakashian in \cite{Feng2001}, provides a trace inequality for the boundaries $\p D$ of elements $D\in\calT_H$. However, the inequality is not written there in the form that is required for our purposes. So, we present again the proof, with some variations from the arguments in \cite{Feng2001}.
\begin{lemma}\label{lem:discrete_trace_inequality}
 Let $\{\calT_H\}_H$ and $\{\calT_h\}_h$ be shape-regular sequences of nested simplicial or parallelepipedal meshes satisfying the conditions \eqref{eq:card_F_bound} and \eqref{eq:c_h_bound}, and let $\bo p$ satisfy \eqref{eq:c_p_bound}. Let $v \in L^2(D)$ belong to $\calP_{p_K}(K)$ for each $K\subset D$. Then, we have
\begin{multline}\label{eq:discrete_trace_inequality_0}
 \norm{v}_{L^2(\p D)}^2 \lesssim \sum_{K\in \calT_{h}(D)} \abs{v}_{H^1(K)}\norm{v}_{L^2(K)} + \frac{1}{H_D} \norm{v}_{L^2(D)}^2
\\ + \left(\sum_{F\in \calF_{h}^i(D)} \frac{\tilde{p}_F^2}{\tilde{h}_F} \norm{\llb v \rrb}_{L^2(F)}^2 \right)^{\frac{1}{2}}\,\norm{v}_{L^2(D)}.
\end{multline}
\end{lemma}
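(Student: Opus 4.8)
The plan is to establish \eqref{eq:discrete_trace_inequality_0} by testing $v^2$ on $\p D$ against a vector field tailored to the coarse element $D$ and then integrating by parts element-by-element over the fine mesh $\calT_h(D)$, which produces jump corrections on the interior faces $\calF_h^i(D)$. First I would exploit that $D$, being a simplex or parallelepiped of a shape-regular family, is star-shaped with respect to a ball of radius $\simeq H_D$ centred at some $x_D\in D$; setting $\bo m(x)\coloneqq(x-x_D)/H_D$ gives $\norm{\bo m}_{L^\infty(D)}\lesssim 1$, $\Div\bo m = \dim/H_D$, and, crucially, $\bo m\cdot n_{\p D}\gtrsim 1$ pointwise on $\p D$, where $n_{\p D}$ is the outward unit normal. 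The last property yields the elementary lower bound $\norm{v}_{L^2(\p D)}^2\lesssim\int_{\p D}v^2\,\bo m\cdot n_{\p D}\,\d s$, so it suffices to bound the right-hand side.

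Since $v|_K$ is a polynomial, hence smooth, on each $K\in\calT_h(D)$, the divergence theorem applied to $v^2\bo m$ on each such $K$, followed by reassembly of the fine-element boundary integrals — using the nestedness of $\calT_S$, $\calT_H$ and $\calT_h$, the identity $\llb v^2\rrb=2\llb v\rrb\lla v\rra$ on interior faces, and the recombination of the contributions on $\calF_h^b(D)$ into the single integral over $\p D$ — gives
\begin{equation*}
\int_{\p D}v^2\,\bo m\cdot n_{\p D}\,\d s = \sum_{K\in\calT_h(D)}\int_K\bigl(2v\,\nabla v\cdot\bo m + v^2\Div\bo m\bigr)\d x - 2\sum_{F\in\calF_h^i(D)}\int_F\llb v\rrb\lla v\rra\,\bo m\cdot n_F\,\d s.
\end{equation*}
Using $\norm{\bo m}_{L^\infty}\lesssim 1$ and $\abs{\Div\bo m}\lesssim 1/H_D$, the two volume sums are bounded by $\sum_{K\in\calT_h(D)}\abs{v}_{H^1(K)}\norm{v}_{L^2(K)}$ and by $H_D^{-1}\norm{v}_{L^2(D)}^2$ respectively, which are exactly the first two terms in \eqref{eq:discrete_trace_inequality_0}.

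It remains to treat the interior-face sum, which is the crux. Bounding $\bo m\cdot n_F$ by a constant and applying the Cauchy--Schwarz inequality on each $F$ with the weight $\tilde p_F/\tilde h_F^{1/2}$ distributed so that it multiplies the jump and its reciprocal multiplies the average, one obtains
\begin{equation*}
\sum_{F\in\calF_h^i(D)}\Labs{\int_F\llb v\rrb\lla v\rra\,\bo m\cdot n_F\,\d s} \lesssim \Bigl(\sum_{F\in\calF_h^i(D)}\frac{\tilde p_F^2}{\tilde h_F}\norm{\llb v\rrb}_{L^2(F)}^2\Bigr)^{\frac12}\Bigl(\sum_{F\in\calF_h^i(D)}\frac{\tilde h_F}{\tilde p_F^2}\norm{\lla v\rra}_{L^2(F)}^2\Bigr)^{\frac12}.
\end{equation*}
For the second factor, the standard $hp$-version trace--inverse inequality $\norm{w}_{L^2(F)}^2\lesssim(p_K^2/h_K)\norm{w}_{L^2(K)}^2$ for $w\in\calP_{p_K}(K)$, $F\subset\p K$, combined with the definitions of $\tilde h_F$ and $\tilde p_F$ and the bound \eqref{eq:card_F_bound} on the number of faces per element, gives $\sum_{F\in\calF_h^i(D)}\tfrac{\tilde h_F}{\tilde p_F^2}\norm{\lla v\rra}_{L^2(F)}^2\lesssim\norm{v}_{L^2(D)}^2$; the hypotheses \eqref{eq:c_h_bound} and \eqref{eq:c_p_bound} are used to pass freely between element data $(h_K,p_K)$ and face data $(\tilde h_F,\tilde p_F)$. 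Collecting this with the lower bound and the volume estimates yields \eqref{eq:discrete_trace_inequality_0}. I expect the main obstacle to be precisely the bookkeeping in this last step: the Cauchy--Schwarz weighting must be chosen so that the jump emerges with exactly the weight $\tilde p_F^2/\tilde h_F$ appearing in the statement, while the averages recombine into $\norm{v}_{L^2(D)}$ with no residual power of $\tilde p_F$, and the face-to-element multiplicity must be controlled uniformly; by comparison, the construction of $\bo m$ and the element-wise integration by parts are routine.
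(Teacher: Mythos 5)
Your proof is correct and follows essentially the same route as the paper's: both construct a radial vector field with uniformly positive outward normal component on $\p D$ (you normalize it by $H_D$, the paper keeps $x-x_0$ and carries the $1/H_D$ factor outside), integrate $v^2$ times this field by parts over $\calT_h(D)$, use $\llb v^2\rrb=2\llb v\rrb\lla v\rra$ on the interior faces, and close the face term via a weighted Cauchy--Schwarz inequality combined with the $hp$ trace--inverse bound $\sum_{F}\tfrac{\tilde h_F}{\tilde p_F^2}\norm{\lla v\rra}_{L^2(F)}^2\lesssim\norm{v}_{L^2(D)}^2$. The only cosmetic difference is that you justify the key geometric property via star-shapedness with respect to a ball of radius $\simeq H_D$, whereas the paper invokes the affine image of a convex reference element; both give $(x-x_0)\cdot n_{\p D}\gtrsim H_D$ under shape regularity.
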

\begin{proof}
As shown in \cite{Feng2001}, since each element $D\in\calT_H$ is an affine image of a convex reference element, it follows that there is a point $x_0 \in D$, such that $(x-x_0)\cdot n_{\p D} \gtrsim H_D$ for each $x\in \p D$, where $n_{\p D}$ is the unit outward normal vector to $\p D$. Therefore,
\begin{equation}\label{eq:discrete_trace_inequality_1}
 \norm{v}_{L^2(\p D)}^2 \lesssim \frac{1}{H_D} \int_{\p D} \abs{v}^2 \left(x-x_0\right)\cdot n_{\p D} \, \d s.
\end{equation}
Integration by parts shows that
\begin{multline*}
 \int_{\p D} \abs{v}^2 \left(x-x_0\right)\cdot n_{\p D} \, \d s = \sum_{K\in\calT_h(D)} \int_{K} \left[\Div\left(x-x_0\right) \abs{v}^2 + 2 v \,\nabla v \cdot \left(x-x_0\right)\right] \d x  \\ - \sum_{F\in\calF_h^i(D)} \pair{\llb v^2 \rrb}{\lla \left(x-x_0\right)\cdot n_F \rra}_F.
\end{multline*}
Since $\llb v^2 \rrb = 2 \llb v \rrb \lla v \rra$, it is found that
\begin{multline*}
 \int_{\p D} \abs{v}^2 \left(x-x_0\right)\cdot n_{\p D} \, \d s \lesssim H_D \sum_{K\in\calT_h(D)} \abs{v}_{H^1(K)}\norm{v}_{L^2(K)} + \norm{v}_{L^2(D)}^2 
\\ + H_D \left(\sum_{F\in\calF_h^i(D)} \frac{\tilde{p}_F^2}{\tilde{h}_F} \norm{\llb v \rrb }_{L^2(F)}^2 \right)^{\frac{1}{2}}\left( \sum_{F\in\calF_h^i(D)}  \frac{\tilde{h}_F}{\tilde{p}_F^2}\norm{\lla v \rra}_{L^2(F)}^2 \right)^{\frac{1}{2}}.
\end{multline*}
The inverse and trace inequalities imply that
\[
\sum_{F\in\calF_h^i(D)}  \frac{\tilde{h}_F}{\tilde{p}_F^2}\norm{\lla v \rra}_{L^2(F)}^2 \lesssim \norm{v}_{L^2(D)}^2. 
\]
Therefore, \eqref{eq:discrete_trace_inequality_0} follows from \eqref{eq:discrete_trace_inequality_1} and the above bounds.
\end{proof}

Equipped with the approximation result of Theorem~\ref{thm:dg_coarse_approximation}, it is now possible to prove Theorem~\ref{thm:stable_decomposition} using a similar approach to \cite{Antonietti2011,Feng2001,Feng2005}.

\subsection{Proof of Theorem~\ref{thm:stable_decomposition}}
Let $v_H$ be given as in Theorem~\ref{thm:dg_coarse_approximation}, set $v_0\coloneqq v_H$, and denote by $v_i \in \Vh^i$ the restriction of $ v_h - v_H$ to $\Om_i$, $1\leq i \leq N$.
Then, we have
\begin{equation}\label{eq:stable_decomposition_1}
 \sum_{i=0}^N a_h^i(v_i,v_i) = a_h(v_H,v_H) + a_h(v_h-v_H,v_h-v_H)-\sum_{\substack{i,\,j=1\\i\neq j}}^N a_h(I_i v_i,I_j v_j).
\end{equation}
Observe that the constant appearing on the right-hand side of \eqref{eq:dg_coarse_approximation_2} can be bounded in terms of $\tilde{c}_0$, which was defined in \eqref{eq:c_0_constant}.
So, Theorem~\ref{thm:dg_coarse_approximation} and Lemma~\ref{lem:a_h_coercivity} imply
\begin{subequations}\label{eq:stable_decomposition_2}
\begin{gather}
 a_h(v_H,v_H)\lesssim \normh{v_H}^2  \lesssim  \tilde{c}_0 \, a_h(v_h,v_h),
\\ a_h(v_h-v_H,v_h-v_H)  \lesssim \normh{v_h}^2+\normh{v_H}^2 \lesssim \tilde{c}_0 \, a_h(v_h,v_h).
\end{gather}
\end{subequations}
It remains to bound the last term in \eqref{eq:stable_decomposition_1} for the interface flux and jump terms at the boundaries of the subdomains of $\calT_S$. Expanding this term leads to
\begin{equation}
\sum_{\substack{i,\,j=1\\i\neq j}}^N \abs{a_h(I_i v_i,I_j v_j)} \leq \sum_{k=1}^5 E_k,
\end{equation}
where the quantities $E_k$ are defined by
\begin{subequations}\label{eq:E_definitions}
\begin{align}
 E_1 & \coloneqq \sum_{\substack{i,\,j=1\\i\neq j}}^N \sum_{\substack{F\in\calFhi \\ F\subset \p \Om_i \cap \p \Om_j}}
 \eta_F\,\abs{ \pair{\eval{(v_h-v_H)}{\Om_i}}{\eval{(v_h-v_H)}{\Om_j}}_F } ,
\\ E_2 & \coloneqq  \sum_{\substack{i,\,j=1\\i\neq j}}^N \sum_{\substack{F\in\calFhi \\ F\subset \p \Om_i \cap \p \Om_j}} \mu_F\,  \abs{\pair{\eval{\nablaT (v_h-v_H)}{\Om_i}}{ \eval{\nablaT(v_h-v_H)}{\Om_j}}_F} ,
\\ E_3 &\coloneqq \sum_{\substack{i,\,j=1\\i\neq j}}^N \sum_{\substack{F\in\calF_h^i \\ F\subset \p \Om_i \cap \p \Om_j}} \mu_F\,\abs{ \pair{\eval{\nabla (v_h-v_H)}{\Om_i}\cdot n_F}{ \eval{\nabla(v_h-v_H)}{\Om_j}\cdot n_F}_F},
\\ E_4 &\coloneqq  \sum_{\substack{i,\,j=1\\i\neq j}}^N \sum_{\substack{F\in\calFhi \\ F\subset \p \Om_i \cap \p \Om_j}}\abs{\pair{\divT\nablaT \eval{(v_h-v_H)}{\Om_i}}{\eval{\nabla(v_h-v_H)}{\Om_j}\cdot n_F}_F},
\\ E_5 &\coloneqq  \sum_{\substack{i,\,j=1\\i\neq j}}^N \sum_{\substack{F\in\calFhi \\ F\subset \p \Om_i \cap \p \Om_j}}\abs{\pair{\eval{\nablaT(\nabla(v_h-v_H)}{\Om_i}\cdot n_F)}{\eval{\nablaT(v_h-v_H)}{\Om_j}}_F}.
\end{align}
\end{subequations}
Note that in \eqref{eq:E_definitions}, we have made use of the symmetry of the sum over $i$, $j$, $i\neq j$, and the fact that any face $F\subset\p\Om_i\cap\p\Om_j$ must be an interior face.

Defining $\eta_D \coloneqq  \max_{K\in\calT_h(D)} p_K^6/h_K^3$ for each $D\in\calT_H$, the hypotheses \eqref{eq:c_h_bound} and \eqref{eq:c_p_bound} and the nestedness of the meshes imply that
\[
 E_1 \lesssim \sum_{D\in\calT_H} \eta_D \norm{v_h-v_H}_{L^2(\p D)}^2.
\]
Therefore, using the trace inequality of Lemma~\ref{lem:discrete_trace_inequality}, we find that
\begin{multline*}
 E_1 \lesssim \sum_{D\in\calT_H} \eta_D
\Biggl[ \frac{H_D}{q_D}  \sum_{K\in\calT_h(D)} \abs{v_h-v_H}_{H^1(K)}^2 + \frac{H_D}{q_D}  \sum_{F\in\calF_h^i(D)} \frac{\tilde{p}_F^2}{\tilde{h}_F} \norm{\llb v_h \rrb}_{L^2(F)}^2   \\
 + \frac{q_D}{H_D} \sum_{K\in\calT_h(D)}\norm{v_h-v_H}_{L^2(K)}^2 \Biggr].
\end{multline*}
Notice that the jumps $\llb v_H \rrb$ vanish for faces $F\in\calFhi(D)$.
Therefore, the approximation bound of Theorem~\ref{thm:dg_coarse_approximation} gives
\begin{multline}\label{eq:e_1_bound_0}
 E_1 \lesssim \max_{D\in\calT_H}\left[ \eta_D \frac{H_D}{q_D} \right]\max_{D\in\calT_H} \frac{H_D^2}{q_D^2} \normh{v_h}^2
 + \max_{D\in\calT_H}\left[ \eta_D \frac{H_D}{q_D} \max_{F\in\calF_h^i(D)}\frac{\tilde h_F^2}{\tilde{p}_F^4}\right] \absj{v_h}^2
\\ + \max_{D\in\calT_H}\left[ \eta_D \frac{q_D}{H_D} \right]\max_{D\in\calT_H} \frac{H_D^4}{q_D^4} \normh{v_h}^2,
\end{multline}
and thus it follows from \eqref{eq:c_h_bound} and \eqref{eq:c_p_bound} and coercivity of $a_h$ that
\begin{equation}\label{eq:e_1_bound_1}
 E_1 \lesssim \max_{D\in\calT_H}\left[ \frac{q_D}{H_D}  \max_{K\in\calT_h(D)} \frac{p_K^6}{h_K^3}\right] \max_{D\in\calT_H} \frac{H_D^4}{q_D^4} \, a_h(v_h,v_h).
\end{equation}
Remark that we have used the bounds $H_D/q_D \lesssim q_D / H_D \max_{D\in\calT_H} H_D^2/q_D^2$ and also $H_D/q_D \max_{F\in\calF_h^i(D)}\tilde h_F^2/\tilde{p}_F^4 \lesssim q_D/H_D \max_{D\in\calT_H} H_D^4/q_D^4$ in going from \eqref{eq:e_1_bound_0} to \eqref{eq:e_1_bound_1}. This is done because it is currently not possible to improve the last term in \eqref{eq:e_1_bound_0}, as a consequence of the nonlocal form of the bounds in Theorems~\ref{thm:dg_h2_approximation} and Theorem~\ref{thm:dg_coarse_approximation}.

The Cauchy--Schwarz inequality with a parameter and the symmetry of the sum over $i$, $j$, $j\neq i$, imply that
\begin{equation}\label{eq:E_k_bound}
\sum_{k=2}^5 E_k \lesssim 
\sum_{\substack{1\leq i\neq j\leq N\\F\in\calFhi \\ F\subset \p \Om_i \cap \p \Om_j}} \mu_F^{-1}\norm{\eval{D^2(v_h-v_H)}{\Om_i}}_{L^2(F)}^2 + \mu_F \norm{\eval{\nabla(v_h-v_H)}{\Om_j}}_{L^2(F)}^2.
\end{equation}
Since $\calT_S$ is conforming, each face $F$ may appear at most twice in the above sum, and thus the trace and inverse inequalities imply that 
\begin{equation}\label{eq:E_k_bound_2}
\sum_{\substack{1\leq i\neq j\leq N\\F\in\calFhi \\ F\subset \p \Om_i \cap \p \Om_j}} \mu_F^{-1}\norm{\eval{D^2(v_h-v_H)}{\Om_i}}_{L^2(F)}^2 \lesssim \sum_{K\in\calT_h}\norm{v_h-v_H}_{H^2(K)}^2 \lesssim \tilde{c}_0 a_h(v_h,v_h).
\end{equation}
Defining $\mu_D \coloneqq \max_{K\in\calT_h(D)} p_K^2/h_K$, we apply Lemma~\ref{lem:discrete_trace_inequality} componentwise to the gradient of $v_h-v_H$ to find that
\begin{multline}\label{eq:E_k_bound_3}
\sum_{\substack{i,\,j=1\\i\neq j}}^N \sum_{\substack{F\in\calFhi \\ F\subset \p \Om_i \cap \p \Om_j}} \mu_F \norm{\eval{\nabla(v_h-v_H)}{\Om_j}}_{L^2(F)}^2 \lesssim \sum_{D\in\calT_H} \mu_D \norm{\nabla (v_h-v_H)}_{L^2(\p D)}^2
\\ 
\lesssim \sum_{D\in\calT_H} \mu_D \Biggl[ \frac{H_D}{q_D} \sum_{K\in\calT_h(D)} \abs{v_h-v_H}_{H^2(K)}^2+\frac{H_D}{q_D} \sum_{F\in\calF_h^i(D)}\frac{\tilde{p}_F^2}{\tilde{h}_F} \norm{\llb \nabla v_h \rrb}_{L^2(F)}^2
\\  + \frac{q_D}{H_D} \sum_{K\in\calT_h(D)} \abs{v_h-v_H}_{H^1(K)}^2 \Biggr].
\end{multline}
It is important to observe that only terms involving interior faces of the mesh $\calT_h$ appear on the right-hand side of the above inequality, so for each $F\in\calF_h^i(D)$, we have
$\norm{\llb \nabla v_h \rrb}_{L^2(F)}^2 = \norm{\llb \nablaT v_h \rrb}_{L^2(F)}^2 + \norm{ \llb \nabla v_h \cdot n_F \rrb}_{L^2(F)}^2$. So, we deduce that
\begin{multline*}
\sum_{D\in\calT_H} \mu_D \norm{\nabla (v_h-v_H)}_{L^2(\p D)}^2
 \lesssim \max_{D\in\calT_H}\left[\mu_D \frac{H_D}{q_D} \right]  \norm{v_h-v_H}_{H^2(\Om;\calT_h)}^2 \\ + \max_{D\in\calT_H}\left[\mu_D \frac{H_D}{q_D} \right]  \absj{v_h}^2  + \max_{D\in\calT_H}\left[\mu_D\frac{q_D}{H_D}\right]\norm{v_h-v_H}_{H^1(\Om;\calT_h)}^2,
\end{multline*}
and thus Theorem~\ref{thm:dg_coarse_approximation} and coercivity of $a_h$ show that
\begin{equation}\label{eq:E_k_bound_4}
\sum_{D\in\calT_H} \mu_D \norm{\nabla (v_h-v_H)}_{L^2(\p D)}^2
  \lesssim \max_{D\in\calT_H}\left[ \frac{q_D}{H_D} \max_{K\in\calT_h(D)} \frac{p_K^2}{h_K} \right] \max_{D\in\calT_H} \frac{H_D^2}{q_D^2} \, a_h(v_h,v_h).
\end{equation}
Therefore, the inequalities \eqref{eq:E_k_bound}, \eqref{eq:E_k_bound_2} and \eqref{eq:E_k_bound_4} show that
\begin{equation}\label{eq:E_k_bound_5}
 \sum_{k=2}^5 E_k  \lesssim \max_{D\in\calT_H}\left[ \frac{q_D}{H_D} \max_{K\in\calT_h(D)} \frac{p_K^2}{h_K} \right] \max_{D\in\calT_H} \frac{H_D^2}{q_D^2} \, a_h(v_h,v_h).
\end{equation}
In summary, combining the inequalities \eqref{eq:stable_decomposition_2}, \eqref{eq:e_1_bound_1} and \eqref{eq:E_k_bound_5} implies that
\begin{equation}\label{eq:stable_decomposition_bound_final}
 \sum_{i=0}^N a_h^i(I_i v_i,I_i v_i) \lesssim \tilde{c}_0 \,a_h(v_h,v_h) + \sum_{k=1}^5 E_k  \lesssim \tilde{c}_0 \, a_h(v_h,v_h),
\end{equation}
which completes the proof of the stable decomposition property of Theorem~\ref{thm:stable_decomposition}.

The proof of Theorem~\ref{thm:stable_decomposition} completes the verification of Properties~\ref{prop:stable_decomposition}--\ref{prop:local_stability}, and thus gives the bound~\eqref{eq:condition_preconditioned} for the condition number of the preconditioned system.

\section{Numerical experiments}\label{sec:numexp}
We test the theoretical results of section~\ref{sec:preconditioners} and investigate the performance and competitiveness of the preconditioners in practical applications. Direct factorizations were used to form the coarse mesh and local solvers.

\subsection{Sharpness of the bound}\label{sec:numexp1}
Since the bound \eqref{eq:condition_preconditioned} is the first to be explicit in both coarse and fine mesh polynomial degrees, it is important to ascertain its sharpness.
Let $\Om=(0,1)^2$, and let the fixed meshes $\calT_H=\calT_S$ be obtained by a uniform subdivision of $\Om$ into $4$ squares, and let $\calT_h$ be obtained by uniform subdivision of $\Om$ into $16$ squares. We consider the sequence of spaces $\Vh$ of piecewise polynomials on $\calT_h$ with total degree $p$, where $p=2,\dots,12$, and the coarse spaces $\VH$ of piecewise polynomials on $\calT_H$ with total degree $q$, where $q=2,\dots, 6$.
We apply the additive Schwarz preconditioner defined in section~\ref{sec:preconditioners} to the bilinear form $a_h$ defined in \eqref{eq:a_h_definition}, where the penalty parameters are defined by $\cmu=\ceta=10$.
These choices are made to ensure that the resulting number of degrees of freedom is small, being at most equal to $1456$ in the case of $p=12$, thereby facilitating the accurate computation of the condition numbers $\kPA$ of the preconditioned matrix~$\bo P$.
The resulting condition numbers are given in Table~\ref{tab:pq_condition_numbers}, which shows that $\kPA$ is of order $1+p^6/q^3$, in agreement with the results of section~\ref{sec:preconditioners} and in particular with the bound \eqref{eq:predicted_condition_number}. This confirms that the predicted rates with respect to the polynomial degrees are optimal.
\cb{We further verify the sharpness of the bounds with respect to the parameters $H$ and $h$ in Table~\ref{tab:Hh_condition_numbers}, which presents the condition numbers for varying $h=2^{-m}$, $m=2,\dots,5$, and fixed $H=1/2$, and fixed $p=q=2$. It is seen that the predicted rate $\kPA$ is of order $H^3/h^3$ in agreement with the theory.}

\begin{table}[tb]
\begin{small}
\begin{center}
\begin{tabular}{| c | c c c c c | c |}
\hline
\vspace{-2.1ex} & & & & & & \\ 
  $\kPA$    & $q=2$ & $q=3$ & $q=4$ & $q=5$ & $q=6$  & $q$ rate \\ \hline
  \vspace{-2.1ex} & & & & & & \\ 
$p=2$      & $2.16\times10^1$ &&&&& \\
$p=3$      & $3.34\times10^2$ & $6.71\times10^1$&&&& \\
$p=4$      & $1.94\times10^3$ & $3.16\times10^2$&$1.35\times10^2$&&& \\
$p=5$      & $7.22\times10^3$ & $1.43\times10^3$&$4.11\times10^2$&$2.10\times10^2$&& \\
$p=6$      & $2.12\times10^4$ & $4.40\times10^3$&$1.31\times10^3$&$6.44\times10^2$&$3.03\times10^2$& $3.60$\\
$p=7$      & $5.31\times10^4$ & $1.10\times10^4$&$3.50\times10^3$&$1.70\times10^3$&$8.97\times10^2$& $3.35$\\
$p=8$      & $1.18\times10^5$ & $2.46\times10^4$&$7.91\times10^3$&$4.27\times10^3$&$2.10\times10^3$& $3.25$\\
$p=9$      & $2.38\times10^5$ & $4.88\times10^4$&$1.61\times10^4$&$8.68\times10^3$&$4.55\times10^3$& $3.10$\\
$p=10$     & $4.48\times10^5$ & $9.17\times10^4$&$3.00\times10^4$&$1.64\times10^4$&$8.86\times10^3$& $3.00$\\
$p=11$     & $7.92\times10^5$ & $1.61\times10^5$&$5.29\times10^4$&$2.90\times10^4$&$1.58\times10^4$& $2.97$\\
$p=12$     & $1.33\times10^6$ & $2.71\times10^5$&$8.89\times10^4$&$4.87\times10^4$&$2.66\times10^4$& $2.97$\\ \hline
$p$ rate   & $5.97$           &$5.94$    &$5.96$&$5.97$&$6.03$& \\ \hline
\end{tabular}
\caption{Dependence of the condition number $\kPA$ on the coarse and fine mesh polynomial degrees for the experiment of section~{\upshape\ref{sec:numexp1}}. The asymptotic rates are computed by regression on the last three entries of each column for $p$ and each row for $q$. It is found that $\kPA$ is of order  $1+p^6/q^3$, as predicted in section~{\upshape\ref{sec:preconditioners}}.}
\label{tab:pq_condition_numbers}
\end{center}
\end{small}
\end{table}

\begin{table}[tb]
\begin{small}
\begin{center}
\begin{tabular}{| c | c c c c | c |}
\hline
\vspace{-2.1ex} & & & & &  \\ 
  $\kPA$    & $h=1/4$ & $h=1/8$ & $h=1/16$ & $h=1/32$ & rate \\ \hline 
\vspace{-2.1ex} & & & & &  \\ 
 $H=1/2$ & $1.57\times 10^2$ & $1.19\times 10^3$ & $1.08\times 10^4$ & $8.90\times 10^4$ & $3.06$ \\ \hline
\end{tabular}
\caption{\cb{Dependence of the condition number $\kPA$ on the ratio of mesh sizes $H/h$, for fixed polynomial degrees $p$ and $q$. The asymptotic rates $\kPA$ is found to be of order $H^3/h^3$, in agreement with the bounds of section~{\upshape\ref{sec:preconditioners}}.}}
\label{tab:Hh_condition_numbers}
\end{center}
\end{small}
\end{table}

\subsection{Comparison with overlapping methods}\label{sec:numexp2}
In this section, we compare the efficiency of nonoverlapping methods with the closely related overlapping methods. It is found the methods achieve similar performances in terms of iteration counts, although nonoverlapping methods are often faster due to lower computational costs.

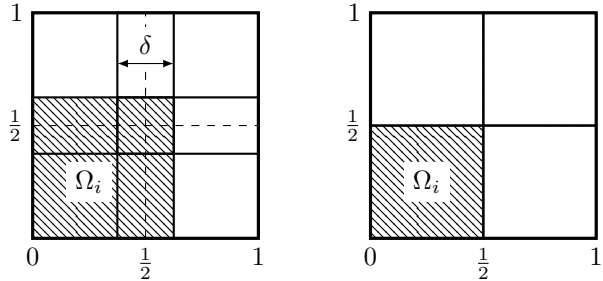
\begin{figure}
\begin{center}
\begin{tikzpicture}[scale=1.5,>=latex]
\node[below] at (-1,-1) {$0$};
\node[below] at (0,-1) {$\half$};
\node[below] at (1,-1) {$1$};
\node[left] at (-1,0) {$\half$};
\node[left] at (-1,1) {$1$};
\draw[very thick]  (-1,-1)  rectangle (1,1);
\draw[thick, pattern=north west lines] (-1,-1) rectangle   (0.25,0.25);
\draw[thick] (1,-1) rectangle (-0.25,0.25);
\draw[thick] (-1,1) rectangle (0.25,-0.25);
\draw[thick] (1,1) rectangle (-0.25,-0.25);
\draw[dashed] (-1,0)--(1,0);
\draw[dashed] (0,-1)--(0,1);
\draw node at (-0.5,-0.5) [fill=white] {$\Om_i$};
\draw[<->] (-0.25,0.55)-- node [above,fill=white] {$\delta$} (0.25,0.55) ;
\end{tikzpicture}
\qquad
\begin{tikzpicture}[scale=1.5]
\node[below] at (-1,-1) {$0$};
\node[below] at (0,-1) {$\half$};
\node[below] at (1,-1) {$1$};
\node[left] at (-1,0) {$\half$};
\node[left] at (-1,1) {$1$};
\draw[very thick]  (-1,-1)  rectangle (1,1);
\draw[thick, pattern=north west lines] (-1,-1) rectangle   (0.0,0.0);
\draw[thick] (1,-1) rectangle (-0.0,0.0);
\draw[thick] (-1,1) rectangle (0.0,-0.0);
\draw[thick] (1,1) rectangle (-0.0,-0.0);
\draw[dashed] (-1,0)--(1,0);
\draw[dashed] (0,-1)--(0,1);
\draw node at (-0.5,-0.5) [fill=white] {$\Om_i$};
\end{tikzpicture}
\caption{Overlapping and nonoverlapping decompositions of $\Om=(0,1)^2$ used in the experiment of section~{\upshape\ref{sec:numexp2}}. Four subdomains are used for both the overlapping and nonoverlapping methods, with the overlap size $\delta$ defined as the length shown above.}
\label{fig:overlap_dec}
\end{center}
\end{figure}

Let $\Om\coloneqq (0,1)^2$, and let $\calT_h$ be obtained by uniform subdivision of $\Om$ into squares of size $h=2^{-k}$, $k=3,\dots,8$. Let $\Vh$ consist of the space of polynomials of fixed partial degree $p=2$ on each element $K\in\calT_h$.
Consider the model problem: find $u_h\in \Vh$ such that $a_h(u_h,v_h)=\ell(v_h)$ for all $v_h\in\Vh$, where the linear functional $\ell_h$ is chosen so that the solution $u_h$ approximates the function $u(x,y)\coloneqq\mathrm{e}^{xy}\sin(\pi x)\sin(\pi y)$; specifically, we define $\ell(v_h) =  \sum_{K} \pair{ \Delta u}{\Delta v_h}_K$ for all $v_h\in\Vh$.
It can then be shown that $\norm{u-u_h}_{H^2(\Om;\calT_h)} \lesssim h^{p-1}$ \cite{Smears2013}. The penalty parameters are chosen so that $\mu_F = 10/\tilde{h}_F$ and $\eta_F = 10/\tilde{h}_F^3$.

\paragraph{Overlapping domain decomposition} Let $\delta\in(0,1)$ and let $\Om$ be divided into overlapping subdomains $\calT_S=\{\Om_i\}_{i=1}^4$, as shown in the left-hand side diagram of Figure~\ref{fig:overlap_dec}. This yields an overlapping decomposition of $\Om$ with overlap $\delta$; here, we use $\delta\in\{1/4,1/8,1/16\}$. Let $\calT_H$ be a coarse mesh consisting of a uniform subdivision of $\Om$ into $4$ squares, thus yielding the ratios $H/\delta \in \{2,4,8\}$, and let $\VH$ consist of the space of polynomials of fixed partial degree $q=2$ on each element $D\in\calT_H$. The local spaces $\Vh^i$ with associated solvers $a_h^i$, $1\leq i \leq 4$, are defined analogously to the nonoverlapping case, described in section~\ref{sec:preconditioners}. The additive Schwarz preconditioner is also defined analogously to section~\ref{sec:preconditioners}.

\paragraph{Nonoverlapping domain decomposition} The domain $\Om$ is partitioned into four subdomains $\calT_S=\{\Om_i\}_{i=1}^4$, as shown in the right-hand side diagram of Figure~\ref{fig:overlap_dec}. We consider three sequences of coarse meshes $\calT_H$, also obtained by uniform subdivision of $\Om$ into squares of size $H=2^{-m}$, $m=1,\dots,k-1$, so that $H/h\in\{2,4,8\}$. The nonoverlapping additive Schwarz preconditioner is defined as in section~\ref{sec:preconditioners}.

\paragraph{Results}
The implementations of the overlapping and nonoverlapping methods were the same, except for the required difference in handling the subdomains. Since the parallelizations of overlapping and nonoverlapping methods differ, our implementation was in serial in order to permit a more straightforward comparison.
Table~\ref{tab:iterations} gives the number of iterations required to reduce the residual norms by a factor of $10^{-6}$. The results for both methods are comparable to those in the literature: see for instance \cite{Antonietti2007,Antonietti2011,Brenner2005,Lasser2003}. 
Table~\ref{tab:iterations} also presents a representative sample of the CPU times required for the assembly of the preconditioner and the application of the preconditioned CG method. The assembly timing strictly includes the time spent on assembling and factorizing the coarse and local mesh solvers, whereas the solver time strictly includes the time spent on applying the preconditioned CG method. These timings are meant to provide only a relative comparison of the methods, with better absolute timings achievable by parallelization.

For the same iteration count, the nonoverlapping methods are generally faster in both assembly and solution. This advantage in efficiency is essentially the result of the smaller dimension of the subdomain solvers. The nonoverlapping method is also generally cheaper in terms of memory costs.
Our results show that both methods are efficient, with low iteration counts that remain bounded for fixed $H/\delta$ or $H/h$.
In both cases, the results are comparable to computational results from the literature~\cite{Antonietti2007,Feng2001}. \cb{The extension of the analysis for nonoverlapping preconditioners from this work to the case of overlapping preconditioners is an interesting problem for future work.}

\begin{table}
\begin{center}
\begin{small}
\begin{tabular}{ |c c | c c c |  c  c  c |}
\hline
& &  \multicolumn{6}{c|}{Iteration count}  \\ \hline
& &  \multicolumn{3}{c|}{Overlapping} & \multicolumn{3}{c|}{Nonoverlapping} \\
DoF & $h$  & $H=2\delta$ & $H=4\delta$ & $H=8\delta$  & $H=2h$ & $H=4h$  & $H=8h$  \\ \hline
144 & 1/4 & & & & 20 &   &  \\
576 & 1/8 & 18 & & & 22  & 29  &  \\
2304 & 1/16  & 18 & 24 & & 22  & 30  & 43 \\
9216 & 1/32  & 18 & 25 & 37 & 20  & 32  & 52 \\
36864 & 1/64 & 18 & 25 & 41 & 18 & 30 & 50  \\
147456 & 1/128 & 18 & 26 & 41 & 17 & 27  & 48 \\
589824 & 1/256 & 18 & 26 & 42 & 17 & 25  & 40 \\
 \hline 
 \multicolumn{8}{c}{~} \\
 \hline
 & &  \multicolumn{6}{c|}{Timing}  \\ \hline
& &  \multicolumn{3}{c|}{Overlapping} & \multicolumn{3}{c|}{Nonoverlapping} \\
\multicolumn{2}{|c|}{$h=1/128$} & $H=2\delta$ & $H=4\delta$ & $H=8\delta$  & $H=2h$ & $H=4h$  &  $H=8h$  \\ \hline
\multicolumn{2}{|c|}{Assembly time }& 18.6s & 14.5s & 13.0s & 14.0s & 11.9s & 11.6s  \\
\multicolumn{2}{|c|}{Solver time } & 8.39s & 9.56s & 13.3s & 6.51s & 8.62s & 14.4s \\ \hline
\end{tabular}
\caption{Number of preconditioned CG iterations required to reduce the residual norm by a factor of $10^{-6}$ for overlapping and nonoverlapping methods, in the experiment of section~{\upshape\ref{sec:numexp2}}, along with sample timings for assembly and timings of the preconditioned CG algorithm. The methods yield similar iteration counts for similar ratios of $H/\delta$ or $H/h$, but the nonoverlapping method is faster to assemble and apply, as a result of the smaller number of degrees of freedom in the local solvers.}
\label{tab:iterations}
\end{small}
\end{center}
\end{table}

\subsection{Application to HJB equations}\label{sec:numexp3}
We will now consider applications of the preconditioning methods to problems of practical interest, namely fully nonlinear HJB equations. As explained above, this introduces several challenges, such as nonsymmetric linear systems that appear in the semismooth Newton method. Nevertheless, it is found that nonoverlapping methods in particular remain robust and lead to efficient solvers for these problems for $h$-version methods.
The example presented here is closely related to the one from \cite[Section~9.1]{Smears2014}. 
Consider the boundary-value problem
\begin{equation}\label{eq:HJB_PDE2}
\begin{aligned}
\sup_{\a\in\Ld}\left[ L^\a u - f^\a \right] &=0 & &\text{in }\Om,\\
u &= 0  & &\text{on }\DO,
\end{aligned}
\end{equation}
where $\Om=(0,1)^2$, $\Ld\coloneqq [0,\pi/3]\times\mathrm{SO}(2)$, and where $L^\a v \coloneqq a^\a\colon D^2v$, with
\begin{equation}
\begin{aligned}
a^\a &\coloneqq \frac{1}{2} R\begin{pmatrix}
1+\sin^2\theta & \sin\theta\,\cos\theta \\
\sin\theta\,\cos\theta & \cos^2\theta
\end{pmatrix}
R^\top,  & \a &=(\theta,R)\in\Ld.
\end{aligned}
\end{equation}
The source terms $f^\a$, $\a\in\Ld$, are chosen so that the exact solution is given by $u(x,y)= \mathrm{e}^{xy}\sin(\pi x)\sin(\pi y)$, whilst yielding large variations in the values of $\a$ that attain the supremum in \eqref{eq:HJB_PDE2}. 
As explained in \cite{Smears2014}, a key challenge in this example is that the diffusion coefficient $a^\a$ is highly anisotropic for $\theta$ near $\pi/3$, and the rotation matrices $R$ may lead to large variations in the resulting diffusions across the domain and between Newton steps.
As a result, significant anisotropic variations in the resulting linearizations are encountered in the application of the semismooth Newton method.

\begin{table}
\begin{center}
\begin{small}
\begin{tabular}{ |c c | c c c |  c  c  c |}
\hline
&   \multicolumn{7}{c|}{Average GMRES iterations (Newton steps)}  \\ \hline
& &  \multicolumn{3}{c|}{4 Subdomains} & \multicolumn{3}{c|}{16 Subdomains} \\
DoF & $h$  & $H=2h$ & $H=4h$ & $H=8h$  & $H=2h$ & $H=4h$  & $H=8h$  \\ \hline
144 & 1/4       & 14.3 (6)  &        	&          	&        	&   		&   \\
576 & 1/8       & 15.2 (5)  & 18.8 (5)  &      		& 17.8 (5) 	&   		&	\\
2304 & 1/16     & 15.4 (5)  & 20.0 (5) 	& 26.8 (5)  & 18.0 (5) 	& 25.0 (5) 	& 	\\
9216 & 1/32     & 16.3 (6)  & 19.7 (6) 	& 29.5 (6) 	& 17.3 (6) 	& 24.0 (6) 	& 36.5 (6)	\\
36864 & 1/64    & 16.0 (6)  & 18.3 (6) 	& 26.3 (6) 	& 17.2 (6) 	& 22.0 (6) 	& 32.8 (6)	\\
147456 & 1/128  & 16.3 (6)  & 18.3 (6) 	& 23.0 (6) 	& 17.0 (6) 	& 19.8 (6) 	& 28.0 (6)	\\
\hline 
\end{tabular}
\caption{Average number of GMRES iterations per Newton step, with total number of Newton steps in parentheses, for the problem of section~{\upshape\ref{sec:numexp3}} with both $4$ and $16$ subdomains.}
\label{tab:GMRES_averages}
\end{small}
\end{center}
\end{table}

\begin{table}
\begin{center}
\begin{small}
\begin{tabular}{ |c c | c c c c |}
\hline
\multicolumn{6}{|c|}{Average GMRES iterations (Newton steps)} \\ \hline
DoF &  $p$  & $q=3$ & $q=4$ & $q=5$  & $q=6 $\\ \hline
16384 & $3$ & 22.0 (6) &  &  &    \\ 
25600 & $4$ & 25.8 (6) & 20.8 (6) & &  \\
36864 & $5$ & 28.8 (6) & 22.0 (6) & 20.8 (6) &  \\
50176 & $6$ & 31.5 (6) & 23.3 (6) & 22.2 (6) & 20.8 (6)  \\
65536 & $7$ & 35.2 (6) & 24.0 (6) & 23.5 (6) & 21.2 (6) \\
82944 & $8$ & 37.0 (6) & 25.2 (6) & 25.0 (6) & 21.8 (6) \\ \hline
\end{tabular}
\caption{\cb{Average number of GMRES iterations per Newton step, with total number of Newton steps in parentheses, for the problem of section~{\upshape\ref{sec:numexp3}} with varying polynomial degrees $p$ and $q$, with fixed $h=1/32$, $H=2h$, 1024 elements and $256$ subdomains.}}
\label{tab:GMRES_averages_pq}
\end{small}
\end{center}
\end{table}

\begin{table}
\begin{center}
\begin{small}
\begin{tabular}{ | c | c c c c c c|}
\hline
 \multicolumn{7}{|c|}{ Average GMRES iterations (Newton steps) } \\ \hline 
 & & & & & & \vspace{-2.1ex} \\
Subdomains & $h=1/4$ & $h=1/8$ & $h=1/16$ & $h=1/32$ & $h=1/64$ & $h=1/128$ \\ \hline 
4 & 17.2 (5) & 17.3 (6) & 17.7 (6) & 17.7 (6) & 17.7 (6) & 17.3 (6) \\
16  &  & 19.2 (6) & 18.8 (6) & 18.5 (6) & 18.5 (6) & 18.2 (6) \\
64 &  &  & 20 (6) & 19.5 (6) & 19.5 (6) & 19.3 (6) \\
256 & & & & 20.8 (6) & 20.5 (6) & 20.3 (6) \\ \hline 
\end{tabular}
\caption{\cb{Average number of GMRES iterations per Newton step required for a relative residual norm tolerance $10^{-4}$, with total number of Newton steps in parentheses, for varying numbers of subdomains, using $H=2h$ and $p=4$.}}
\label{tab:GMRES_average_subdomains}
\end{small}
\end{center}
\end{table}

\begin{table}
\begin{center}
\begin{small}
\begin{tabular}{ | c | c c c c |}
\hline
$h$ & $p=2$ & $p=3$ & $p=4$  & $p=5$   \\ \hline
1/4   & 18 	& 21 &  21 & 22 \\
1/8   & 19  & 20 &	19 & 20 \\
1/16  & 19  & 19 &  19 & 19 \\
1/32  & 18  & 19 & 	17 & 18 \\
1/64  & 17  & 19 &  16 & 17 \\
\hline 
\end{tabular}
\caption{Number of GMRES iterations at the first Newton step required for a relative residual norm tolerance of $10^{-6}$, for various polynomial degrees $2\leq p \leq 5$, using $H=2h$ and $4$ subdomains. The results are better than theoretical predictions, see Remark~{\upshape\ref{rem:GMRES_descriptive}}.}
\label{tab:GMRES_polynomial}
\end{small}
\end{center}
\end{table}

\begin{table}
\begin{center}
\begin{small}
\begin{tabular}{ | c | c c c c c c|}
\hline
 & \multicolumn{6}{c|}{ $h$ } \\ 
Subdomains & 1/4 & 1/8 & 1/16 & 1/32 & 1/64 & 1/128 \\ \hline 
4 & 18 & 19 & 19 & 18 & 17 & 15 \\
16  &  & 22 & 20 & 19 & 18 & 17 \\
64 &  &  & 21 & 19 & 18 & 17 \\
256 & & & & 21 & 19 & 18 \\ \hline 
\end{tabular}
\caption{Number of GMRES iterations on the first Newton step required for a relative residual norm tolerance $10^{-6}$, for varying numbers of subdomains, using $H=2h$ and $p=2$.}
\label{tab:GMRES_n_subdomains}
\end{small}
\end{center}
\end{table}

The numerical scheme \eqref{eq:HJB_scheme} is applied on a sequence of fine meshes $\calT_h$ obtained by uniform subdivision of $\Om$ into squares of size $h=2^{-k}$, $k=3,\dots,7$, with polynomial degrees $2\leq p \leq 5$.
Each iteration of the semismooth Newton method for solving \eqref{eq:HJB_scheme} leads to a nonsymmetric but positive definite linear system \cite{Smears2014}, which we solve using the GMRES method \eqref{eq:GMRES_left_prec} implemented as suggested in \cite{Saad2003}. The nonoverlapping preconditioners are based on the bilinear form $a_h$, using between $4$ and $256$ regular subdomains and $q=p$.

To study the overall performance of the preconditioners, we computed the average number of GMRES iterations per Newton step required to reduce the residual norm $\norm{\bo r_k}_{\bo P^{-1}}$ below a tolerance of $10^{-6}$ or a relative tolerance of $10^{-4}$. Convergence of the Newton method was determined by requiring a step-increment $L^2$-norm below $10^{-6}$. These tolerances were chosen to give a good balance between the different sources of error originating from discretization, linearization and algebraic solvers.
The corresponding results are given in Table~\ref{tab:GMRES_averages}, showing the effectiveness of the preconditioners and their robustness with respect to the anisotropy of the diffusion term. \cb{Tables~\ref{tab:GMRES_averages_pq} and \ref{tab:GMRES_polynomial} shows the iteration counts for varying choices of the polynomial degrees. Tables~\ref{tab:GMRES_average_subdomains} and~\ref{tab:GMRES_n_subdomains} shows that the iteration counts are not affected by the number of subdomains.}
We point out that these iteration counts are comparable to those obtained by Lasser and Toselli in \cite{Lasser2003} for nonsymmetric $H^1$-type problems originating from advection-diffusion-reaction equations. In particular, for moderate polynomial degrees, the preconditioners are found to be efficient and robust under $h$-refinement.

Overall, these results show that nonoverlapping preconditioners are robust and efficient when confronted with the anisotropy, lack of symmetry and nonlinearity of this problem.

\section{Conclusion}\label{sec:conclusion}
Original approximation results for discontinuous finite element spaces lead to optimal order spectral bounds for nonoverlapping domain decomposition preconditioners in $H^2$-norms. In the case of $h$-refinement, we have shown the robustness, efficiency and competitiveness of these preconditioning methods in applications to the nonsymmetric systems arising from fully nonlinear HJB equations.

\end{document}